% ****** Start of file apssamp.tex ******
%
%   This file is part of the APS files in the REVTeX 4.2 distribution.
%   Version 4.2a of REVTeX, December 2014
%
%   Copyright (c) 2014 The American Physical Society.
%
%   See the REVTeX 4 README file for restrictions and more information.
%
% TeX'ing this file requires that you have AMS-LaTeX 2.0 installed
% as well as the rest of the prerequisites for REVTeX 4.2
%
% See the REVTeX 4 README file
% It also requires running BibTeX. The commands are as follows:
%
%  1)  latex apssamp.tex
%  2)  bibtex apssamp
%  3)  latex apssamp.tex
%  4)  latex apssamp.tex
%
\documentclass[11pt,reqno]{amsart}
\usepackage{enumitem}
\usepackage{url}
% \documentclass[%
%  preprint,
% %superscriptaddress,
% %groupedaddress,
% %unsortedaddress,
% %runinaddress,
% %frontmatterverbose,
% %preprint,
% %preprintnumbers,
% %nofootinbib,
% %nobibnotes,
% %bibnotes,
%  amsmath,amssymb,
%  aps,
% %pra,
% %prb,
% %rmp,
% %prstab,
% %prstper,
% %floatfix,
% ]{revtex4-2}

\usepackage[utf8]{inputenc}

\usepackage{graphicx}% Include figure files
\usepackage{dcolumn}% Align table columns on decimal point
\usepackage{enumitem}
\usepackage{bm}% bold math
\usepackage[breaklinks=true,colorlinks=true,linkcolor=blue,urlcolor=blue,citecolor=blue]{hyperref}
\usepackage{amsfonts,amsthm,amsmath,amssymb}
\usepackage{chngcntr}

\counterwithout{figure}{section}
\counterwithout{figure}{subsection}

\theoremstyle{plain}
\newtheorem{theorem}{\bf Theorem}[section]
\newtheorem{proposition}{\bf Proposition}[section]
\newtheorem{lemma} {\bf Lemma}[section]
\theoremstyle{definition}
\newtheorem{definition}{Definition}[section]
\theoremstyle{remark}
\newtheorem*{remark}{Remark}
\newtheorem{example}{Example}[section]

\setlength{\textwidth}{6.2in}

\newcommand{\QQ}{\mathbb{Q}}
\newcommand{\frakC}{\mathfrak{C}}

\newcommand{\D}{\mathbf{D}}
\newcommand{\C}{\mathbf{C}}
\newcommand{\R}{\mathbf{R}}
\newcommand{\re}{\operatorname{Re}}
\newcommand{\Cauchy}{\mathcal{C}}
\newcommand{\T}{\mathbf{T}}
\newcommand{\1}{\mathbf{1}}
\newcommand{\calC}{\mathcal{C}}
\newcommand{\calW}{\mathcal{W}}
\newcommand{\calS}{\mathcal{S}}

\newcommand{\calI}{\mathcal{I}}
\newcommand{\calF}{\mathcal{F}}
\newcommand{\fii}{{\varphi}}
\newcommand{\supp}{\operatorname{supp}}
\def\d{\partial}
\def\dbar{\bar{\partial}}

\usepackage{float}

\calclayout
%\usepackage{hyperref}% add hypertext capabilities
%\usepackage[mathlines]{lineno}% Enable numbering of text and display math
%\linenumbers\relax % Commence numbering lines

%\usepackage[showframe,%Uncomment any one of the following lines to test
%%scale=0.7, marginratio={1:1, 2:3}, ignoreall,% default settings
%%text={7in,10in},centering,
%%margin=1.5in,
%%total={6.5in,8.75in}, top=1.2in, left=0.9in, includefoot,
%%height=10in,a5paper,hmargin={3cm,0.8in},
%]{geometry}
\newcommand{\CC}{\mathbf{C}}
\newcommand{\RR}{\mathbf{R}}

\begin{document}
\def\phi{\varphi}

%\begin{flushright}
%	LU TP XX-YY\\
%	\today
%\end{flushright}

\title{On the uniqueness problem for quadrature domains}

\author{Yacin Ameur}
\address{Yacin Ameur\\
Department of Mathematics\\
Lund University\\
22100 Lund, Sweden}
\email{ Yacin.Ameur@math.lu.se}

\author{Martin Helmer}%
\address{Martin Helmer, \href{https://orcid.org/0000-0002-9170-8295}{https://orcid.org/0000-0002-9170-8295}\\
Mathematical Sciences Institute\\
The Australian National University\\
Canberra, Australia}
 \email{martin.helmer@anu.edu.au}

\author{Felix Tellander}
\address{Felix Tellander, \href{https://orcid.org/0000-0001-6418-8047}{https://orcid.org/0000-0001-6418-8047}\\
 Department of Astronomy and Theoretical Physics\\ Lund University
 \\ SE-223 62, Lund, Sweden
}%
\email{felix@tellander.se}

\
\subjclass[2010]{30C20; 31A25; 14P10; 68W30}

\keywords{Quadrature domain, conformal mapping; real comprehensive triangular decomposition}
%\date{\today}% It is always \today, today,
             %  but any date may be explicitly specified

\begin{abstract}
We study questions of existence and uniqueness of quadrature domains using computational tools from real algebraic geometry.
 These problems are transformed into questions about the number of solutions to an associated real semi-algebraic system, which is analyzed
 using the method of
  real comprehensive triangular decomposition.
\end{abstract}

\maketitle

\section{\label{sec:level1}Introduction}

This note is the result of investigations into an open uniqueness question
for quadrature domains in the complex plane $\C$, which appears in papers such as \cite{G90,GuS05,Za87}. After describing the problem and reviewing some known results, we will suggest and explore an approach based on methods from real algebraic geometry and symbolic computation.

To get started, it is convenient to fix some notation that will be used \textit{throughout}.

\subsubsection*{General notation}

By a ``domain'' $\Omega$ we mean an open and connected subset of $\C$; we write
$\overline{\Omega}$ for its closure, $\d\Omega$ for its boundary, and $\Omega^e=\C\setminus \overline{\Omega}$ for its exterior.
A bounded domain $\Omega$ is said to be ``solid'' if
$\Omega^e$ is connected and $\d\Omega=\d(\Omega^e)$. Note that what we call solid domains are also referred to as ``Carath\'eodory domains" in some sources.

We write ``$dA$'' for the normalized Lebesgue measure in the plane $dA(z)=\frac 1 \pi\, dx\, dy$ (so the unit disc has area $1$).

We denote by $L^1(\Omega)$ the usual $L^1$-space of functions on $\Omega$ that are integrable with respect to $dA$, and we write $AL^1(\Omega)$, $HL^1(\Omega)$, and $SL^1(\Omega)$ for the subsets of $L^1(\Omega)$ consisting of all analytic, harmonic, and subharmonic functions, respectively.

Standard sets: $D(a,r)=\{z\in\C;\,|z-a|<r\}$, $\D=D(0,1)$, $\T=\d \D$.

Differential operators: $\d=\frac 1 2(\d_x-i\d_y)$, $\dbar=\frac 1 2 (\d_x+i\d_y)$, $\Delta=4\d\dbar=\d_x^2+\d_y^2.$
\\\\
Given an open set $\Omega\subset\C$, a subspace (or cone) $\calF\subset L^1(\Omega)$, and a linear functional $\mu:\calF\to\C$,
we consider quadrature identities of the form
\begin{equation}\label{qi}\int_\Omega f\, dA=\mu(f),\qquad f\in\calF.\end{equation}
The linear functional $\mu$ will always be a fixed measure or distribution of appropriate type with \textit{compact support} in $\Omega$
(and defined on the appropriate test-class $\calF$). Since we will frequently take $\mu$ to be a combination of point-evaluations, we stress
that statements like $\calF\subset L^1(\Omega)$ should not be taken literally, but rather in terms of the natural injective maps $\calF\to L^1(\Omega)$.

If the above conditions hold, we say that $\Omega$ is a \textit{quadrature domain} (or ``q.d.'') with data $(\mu,\calF)$,
 and we write $\Omega \in Q(\mu,\calF)$.
We are mainly interested in the case $\calF=AL^1(\Omega)$, but also $\calF=HL^1(\Omega)$,
$\calF=SL^1(\Omega)$ will play a role. In the last case, \eqref{qi} must be replaced by the inequality
\begin{equation*}\label{qin}\mu(f)\le \int_\Omega f\, dA,\qquad f\in SL^1(\Omega).\end{equation*}

It is easy to see that $Q(\mu,SL^1)\subset Q(\mu,HL^1)\subset Q(\mu,AL^1)$ and that a solid domain belongs simultaneously to the classes
$Q(\mu,HL^1)$ and $Q(\mu,AL^1)$.

\smallskip

The above classes are conveniently interpreted in terms of the logarithmic potentials
\begin{equation*}\label{norm}U^\mu:=\ell*\mu,\qquad U^\Omega:=U^{\1_\Omega\, dA},\qquad \text{where}\quad \ell(z):=\frac 1 {2}\log\frac 1 {|z|}.\end{equation*}

For example, we have that $\Omega\in Q(\mu,AL^1)$ if and only if $\d U^\mu=\d U^\Omega$ on $\Omega^e$
and $\Omega\in Q(\mu,SL^1)$ if and only if $U^\mu=U^\Omega$ on $\Omega^e$ and $U^\mu\ge U^\Omega$ on $\C$.

\smallskip

Given these proviso, we can formulate our basic problem in a succinct way (cf. \cite{G90}).

\smallskip

(Q). \textit{Determine whether or not there exists a functional $\mu$ such that
the class $Q(\mu,AL^1)$ contains two distinct, solid domains.}

\begin{theorem} \label{settles} The following uniqueness results are known.
\begin{enumerate}[label=(\roman*)]
\item \label{no38} If $\Omega_1,\Omega_2\in Q(\mu,AL^1)$ are star-shaped with respect to
a common point, then $\Omega_1=\Omega_2$.
\item \label{sa82} If there exists a solid domain $\Omega\in Q(\mu,SL^1)$, then this $\Omega$ is the unique solid quadrature domain, even within
the class $Q(\mu,AL^1)$.
\item \label{sa99} If $\mu$ is a positive measure of total mass $m$ and if $\supp\mu$ is contained in a disc of radius $r$ where $r^2< m$, then each solid domain $\Omega$ of class $Q(\mu,AL^1)$ is obtainable from $\mu$ by partial balayage, and so it belongs to $Q(\mu,SL^1)$.
\end{enumerate}
\end{theorem}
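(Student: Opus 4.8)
Each of the three parts has a classical proof from potential theory, and none of them uses the symbolic-computational machinery developed in the rest of the paper; I only sketch the arguments and indicate where the real difficulty sits. For \ref{no38} the plan is to reduce to P.\,S.\ Novikov's uniqueness theorem for the inverse problem of (logarithmic) potentials. Suppose $\Omega_1,\Omega_2\in Q(\mu,AL^1)$ are solid and star-shaped about a common point $p$. Testing the quadrature identity against the constant $1$ gives $\int_{\Omega_1}dA=\mu(1)=\int_{\Omega_2}dA$, so both domains have the same $dA$-area and hence $U^{\Omega_1}-U^{\Omega_2}\to0$ at infinity. On the common exterior $E=\Omega_1^e\cap\Omega_2^e$, the characterization $\d U^\mu=\d U^{\Omega_i}$ on $\Omega_i^e$ gives $\d(U^{\Omega_1}-U^{\Omega_2})\equiv0$ there, and since the $U^{\Omega_i}$ are real also $\dbar(U^{\Omega_1}-U^{\Omega_2})\equiv0$, so $U^{\Omega_1}-U^{\Omega_2}$ has vanishing gradient and is constant on each component of $E$. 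Now $\overline{\Omega_1}\cup\overline{\Omega_2}$ is again star-shaped about $p$, so its complement $E$ is connected; hence $U^{\Omega_1}-U^{\Omega_2}$ is constant on $E$, and the decay at infinity forces this constant to vanish. Thus the compact star-shaped bodies $\overline{\Omega_1}$ and $\overline{\Omega_2}$ carry the same potential off their union, and Novikov's theorem gives $\overline{\Omega_1}=\overline{\Omega_2}$; since solid domains are determined by their closures, $\Omega_1=\Omega_2$.

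For \ref{sa82} (this is Sakai's theorem) the plan is purely potential-theoretic. Fix $\Omega\in Q(\mu,SL^1)$, so that $U^\mu\ge U^\Omega$ on $\C$ with equality on $\Omega^e$, and let $\Omega'$ be an arbitrary solid member of $Q(\mu,AL^1)$. Because a solid domain in $Q(\mu,AL^1)$ also lies in $Q(\mu,HL^1)$, testing against $z\mapsto\ell(z-\zeta)$ for $\zeta\in(\Omega')^e$ yields $U^{\Omega'}=U^\mu$ on $(\Omega')^e$, hence $U^{\Omega'}=U^\mu\ge U^\Omega$ there. On $\Omega'$ one computes $\Delta(U^{\Omega'}-U^\Omega)=\mathbf{1}_\Omega-\mathbf{1}_{\Omega'}=\mathbf{1}_\Omega-1\le0$, so $U^{\Omega'}-U^\Omega$ is superharmonic on $\Omega'$, continuous up to $\d\Omega'$, and $\ge0$ there; the minimum principle then gives $U^{\Omega'}\ge U^\Omega$ on all of $\C$. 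I expect the genuinely hard step to be the reverse inequality $U^\Omega\ge U^{\Omega'}$: for this one must use that $U^\mu-U^\Omega$ is the \emph{minimal} superharmonic function carrying the obstacle and the asymptotics that define $\Omega$, together with a topological argument locating $\d\Omega$ relative to $\overline{\Omega'}$. Granting both inequalities, $U^\Omega=U^{\Omega'}$, so $\mathbf{1}_\Omega=\mathbf{1}_{\Omega'}$ a.e.\ after applying $\Delta$, and solidity gives $\Omega=\Omega'$.

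For \ref{sa99} the plan is to run the theory of partial balayage. For $\mu\ge0$ with compact support, $\mathrm{Bal}(\mu,\mathbf{1}\,dA)=\mathbf{1}_\omega\,dA$ is produced from an obstacle problem, with $|\omega|=\mu(1)=m$ and $\omega\in Q(\mu,SL^1)$. First I would show that the hypothesis $r^2<m$ forces the balayage to be saturated, i.e.\ that $\supp\mu\subseteq\omega$ and that $\omega$ is a solid domain; then, for any solid $\Omega\in Q(\mu,AL^1)$, that $\omega\subseteq\Omega$ (using $U^\Omega=U^\mu$ on $\Omega^e$ together with the minimality of $U^\mu-U^\omega$). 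Since $|\Omega|=m=|\omega|$ and both domains are solid, this gives $\Omega=\omega$, so in particular $\Omega\in Q(\mu,SL^1)$. The main obstacle is the quantitative first step: verifying that the precise threshold $r^2<m$---and not merely some weaker bound---is what makes the coincidence set of the obstacle problem reach past $\supp\mu$, so that $\mu$ is swept completely and no ``unsaturated'' solid quadrature domain can occur. These pointwise potential comparisons, and the sharp thresholds behind them, are exactly the classical ingredients---due to Novikov and to Sakai---that the remainder of the paper is designed to bypass in favour of a direct, computational attack on problem (Q).
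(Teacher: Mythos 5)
The paper does not actually prove this theorem: its ``Remark on the proof'' merely attributes part (i) to Novikov and parts (ii)--(iii) to Sakai's partial-balayage arguments (with Gustafsson's alternative proof for (ii)), and your proposal reduces each part to exactly those same classical results, so it takes essentially the same approach. The additional sketches you supply for (ii) and (iii) stop, by your own admission, precisely at the hard steps (the reverse potential inequality, and the saturation of the balayage under $r^2<m$) that the cited works of Sakai and Gustafsson provide, which matches the paper's treatment of the statement as a survey of known results.
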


\begin{proof}[Remark on the proof] Part \ref{no38} is due to Novikov \cite{Novikoff1938}, cf. also \cite{Isakov,Gardiner2008}.
Part \ref{sa82} was proved in Sakai's book \cite{Sakai1982}, using the technique of partial balayage. An alternative proof is found in the paper \cite{G90} by Gustafsson.
The statement \ref{sa99} was likewise proved by Sakai using partial balayage, see the papers \cite{Sakai1998,Sakai1999,G07}.
\end{proof}

%\noindent %In connection with uniqueness in the gravi-equivalent sense,

%\smallskip

The ``classical'' setting corresponds to \textit{point-functionals}, i.e., functionals $\mu$ of the form
\begin{equation*}\label{pf}\mu(f)=\sum_{i=1}^m\sum_{j=0}^{n_i-1} c_{ij}f^{(j)}(a_i),\qquad f\in AL^1(\Omega)\end{equation*}
where $a_i$ are some points in $\Omega$ and $c_{ij}$ some complex numbers. A quadrature domain of this type is said to be of \textit{order} $n_1+\cdots+n_m$. When $\mu$ contains no derivatives, i.e., when $\mu(f)=\sum_{i=1}^n c_if(a_i)$ we speak of a \textit{pure} point-functional. (Cf. \cite{Davis74}.)

\begin{example} Theorem \ref{settles}\ref{sa82} completely settles the uniqueness problem for subharmonic quadrature domains. The following example due to Gustafsson  shows that the question (Q)
for analytic test functions is of a different kind.

It is shown in \cite[Section 4]{Gu88} that there exists a quadrature domain $\Omega$ having the appearance in Figure \ref{Figure1}, satisfying a three-point
identity $\int_\Omega f\, dA=c_1f(a_1)+c_2f(a_2)+c_3f(a_3)$ where $c_1,c_2,c_3>0$.
\begin{figure}
\includegraphics[width=.4\textwidth]
{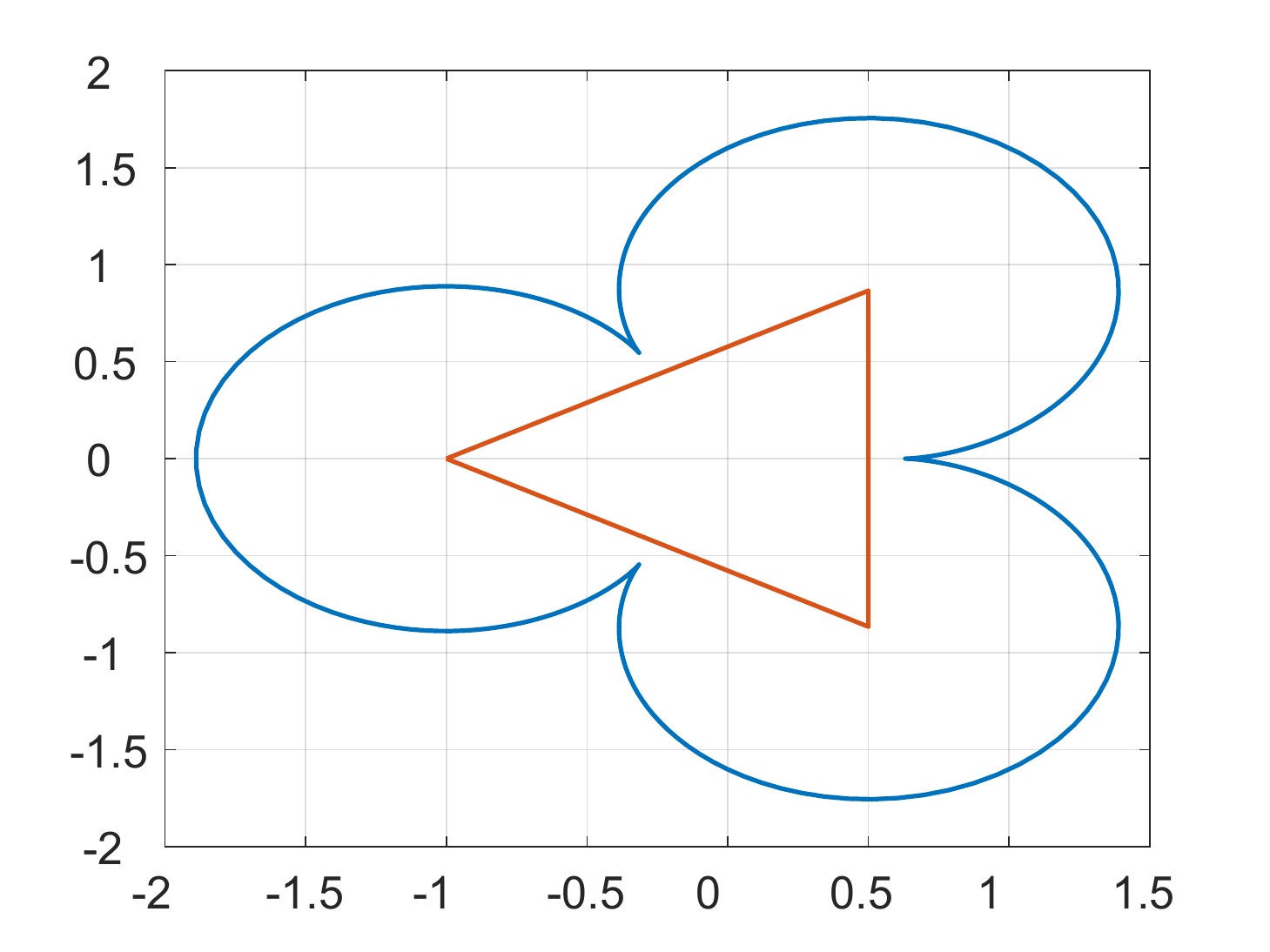}
\caption{Gustafsson's example of a domain in $Q(\mu,HL^1)\setminus Q(\mu,SL^1)$. The triangle depicts the convex hull of the nodes $a_1,a_2,a_3$.}
\label{Figure1}
\end{figure}
It is known (see e.g. \cite[Theorem 2.1]{G07}) that if $\Omega\in Q(\mu,SL^1)$ and if $\d\Omega$ has cusps, then those cusps must be contained in the convex hull of the
nodes $a_i$. Hence the quadrature domain in Figure \ref{Figure1} is not subharmonic.
\end{example}

Now fix a solid quadrature domain $\Omega\in Q(\mu,AL^1)$ containing the origin $0$. Let $\fii:\D\to\Omega$ be the conformal map normalized by $\fii(0)=0$ and $\fii'(0)>0$. Recall that
$\Omega$ is uniquely determined by $\fii$ via Riemann's mapping theorem.

The following theorem, which gives a nontrivial relation for $\fii$, will be the main tool in our subsequent investigations.

\begin{theorem}\label{theorem: structure of univalent function} Suppose that $\mu$ has compact support in $\Omega$ and let $\nu$ be the pullback of $\mu$ to
$\D$, i.e., $\nu(g)=\mu(g\circ\fii^{-1})$ for $g\in AL^1(\D)$.
Then $\phi$ satisfies the relation
	\begin{equation}\label{nonlin}
	\phi(z)=
\nu_\lambda^*\left(\frac{z}{\overline{\phi'(\lambda)}(1-z\bar{\lambda})}\right),\qquad (z\in\mathbf{D})
	\end{equation}
	where $\nu_\lambda^*(g):=\overline{\nu_\lambda(\bar{g})}$ acts on integrable anti-analytic functions $g(\lambda)$. %$\overline{AL^1(\mathbf{D})}$

Conversely, if $\fii$ is any univalent solution to \eqref{nonlin}, then the domain $\Omega=\fii(\D)$ is of class $Q(\mu,AL^1)$.
\end{theorem}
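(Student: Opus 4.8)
The plan is to compress the whole statement into a single weighted identity on $\D$ and read \eqref{nonlin} off it --- in the direct direction by an orthogonality computation, in the converse by a residue computation. Setting $g=f\circ\fii$ and changing variables by $w=\fii(z)$ (so $dA(w)=|\fii'(z)|^2\,dA(z)$), the quadrature identity $\int_\Omega f\,dA=\mu(f)$ becomes
\begin{equation}\label{wqi}
\int_\D g(z)\,|\fii'(z)|^2\,dA(z)=\nu(g),
\end{equation}
valid for every $g$ analytic on $\D$ with $\int_\D|g|\,|\fii'|^2\,dA<\infty$; conversely the family \eqref{wqi} restores $\Omega\in Q(\mu,AL^1)$. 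In the direct statement $\Omega$ is bounded since it is solid, and in the converse \eqref{nonlin} already forces $\fii$ bounded: its right-hand side is bounded uniformly in $z\in\D$ because $\supp\nu$ is a compact subset of $\D$ and $\fii'$ is zero-free, so $|\fii'(\lambda)|^{-1}$ and $|1-z\bar\lambda|^{-1}$ stay bounded on $\supp\nu$ for $|z|\le1$. Hence in both cases $\int_\D|\fii'|^2\,dA=\operatorname{area}(\Omega)<\infty$, i.e.\ $\fii'\in L^2(\D)$.

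\smallskip
\noindent\textbf{The relation \eqref{nonlin} holds.} I would feed into \eqref{wqi} the functions $g_k(z):=z^{k-1}/\fii'(z)$, $k\ge1$. Each $g_k$ is admissible: it is analytic on $\D$ (as $\fii'\ne0$ there), and $\int_\D|g_k|\,|\fii'|^2\,dA=\int_\D|z|^{k-1}|\fii'|\,dA\le\|\fii'\|_{L^2(\D)}<\infty$. On the left side $g_k|\fii'|^2=z^{k-1}\overline{\fii'}$, so with $\fii(z)=\sum_{j\ge1}a_jz^j$ the orthogonality relations $\int_\D z^m\bar z^n\,dA=\delta_{mn}/(m+1)$ give $\int_\D z^{k-1}\overline{\fii'(z)}\,dA=\overline{a_k}$. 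Hence $\nu\bigl(\lambda^{k-1}/\fii'(\lambda)\bigr)=\overline{a_k}$ for all $k\ge1$. Now fix $z\in\D$; the function $\overline{z/[\overline{\fii'(\lambda)}(1-z\bar\lambda)]}=\bar z/[\fii'(\lambda)(1-\bar z\lambda)]=\sum_{k\ge1}\bar z^{\,k}\lambda^{k-1}/\fii'(\lambda)$ converges uniformly for $\lambda$ in a neighbourhood of the compact set $\supp\nu$, so $\nu$ may be applied term by term; conjugating,
\begin{equation*}
\nu_\lambda^*\!\left(\frac{z}{\overline{\fii'(\lambda)}\,(1-z\bar\lambda)}\right)
=\overline{\sum_{k\ge1}\bar z^{\,k}\,\overline{a_k}}=\sum_{k\ge1}a_kz^k=\fii(z),
\end{equation*}
which is \eqref{nonlin}.

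\smallskip
\noindent\textbf{Converse.} By the criterion ``$\Omega\in Q(\mu,AL^1)\iff\d U^\mu=\d U^\Omega$ on $\Omega^e$'' recalled above, it suffices to prove
\begin{equation*}
\int_\D\frac{|\fii'(w)|^2}{z-\fii(w)}\,dA(w)=\nu_\lambda\!\left(\frac1{z-\fii(\lambda)}\right)
\qquad(z\notin\overline\Omega),
\end{equation*}
since the left side equals $\int_\Omega(z-\zeta)^{-1}\,dA(\zeta)$ (via $\zeta=\fii(w)$) and the right side equals $\int(z-\zeta)^{-1}\,d\mu(\zeta)$. Write $|\fii'|^2=\dbar_w(\overline{\fii(w)}\,\fii'(w))$; since $(z-\fii(w))^{-1}$ is holomorphic in $w$ on $\D$ for $z\notin\overline\Omega$, Green's formula on $r\D$ gives, for $r<1$,
\begin{equation*}
\int_{r\D}\frac{|\fii'(w)|^2}{z-\fii(w)}\,dA(w)=\frac1{2\pi i}\oint_{|w|=r}\frac{\overline{\fii(w)}\,\fii'(w)}{z-\fii(w)}\,dw.
\end{equation*}
Now substitute $\overline{\fii(w)}=\nu_\lambda\bigl(\bar w/[\fii'(\lambda)(1-\bar w\lambda)]\bigr)$ from \eqref{nonlin}, pull $\nu_\lambda$ out of the contour integral (valid as $\supp\nu\Subset\D$), replace $\bar w$ by $r^2/w$ on $|w|=r$ so that $\bar w/(1-\bar w\lambda)=r^2/(w-r^2\lambda)$, and evaluate by residues: inside $|w|<r$ the integrand has only the simple pole at $w=r^2\lambda$ (note $|r^2\lambda|<r$ on $\supp\nu$), with residue $r^2\fii'(r^2\lambda)/(z-\fii(r^2\lambda))$. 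This yields
\begin{equation*}
\int_{r\D}\frac{|\fii'(w)|^2}{z-\fii(w)}\,dA(w)=r^2\,\nu_\lambda\!\left(\frac{\fii'(r^2\lambda)}{\fii'(\lambda)\,(z-\fii(r^2\lambda))}\right).
\end{equation*}
Letting $r\to1$: the left side tends to $\int_\D|\fii'|^2(z-\fii)^{-1}\,dA$ by dominated convergence ($(z-\fii)^{-1}$ is bounded on $\D$ for $z\in\Omega^e$ and $|\fii'|^2\in L^1(\D)$), while $\fii'(r^2\lambda)\to\fii'(\lambda)$ and $\fii(r^2\lambda)\to\fii(\lambda)$ uniformly on $\supp\nu$, so the right side tends to $\nu_\lambda((z-\fii(\lambda))^{-1})$. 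This is the required identity, hence $\Omega\in Q(\mu,AL^1)$.

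\smallskip
\noindent\textbf{Expected difficulty.} Analytically this is almost entirely bookkeeping; the one delicate point is that $\fii$ and $\fii'$ need not extend continuously to $\T$, so Green's formula cannot be applied on $\D$ directly. This is handled above by integrating over $r\D$ and letting $r\to1$ only at the end, which is legitimate because $\fii'\in L^2(\D)$ and $(z-\fii)^{-1}$ is bounded on $\D$ for $z\in\Omega^e$. The interchanges of $\nu$ with sums and with the contour integral rest on $\supp\nu\Subset\D$; when $\nu$ is a compactly supported distribution rather than a measure one additionally uses that the relevant analytic families converge uniformly on a fixed neighbourhood of $\supp\nu$ together with all $\lambda$-derivatives.
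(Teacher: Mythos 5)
Your argument is correct, and it reaches \eqref{nonlin} by a genuinely different route from the paper. For the direct implication the paper pulls the quadrature identity back to $\D$, substitutes $g=(f\circ\fii)\fii'$, takes $g$ to be a Cauchy kernel $k_z$ with $z\in\T$, and then invokes Lemma \ref{Lemma: CauchyTransform} ($\Cauchy[\overline{\fii'}\1_\D]=\overline{\fii}$ on $\T$), which itself rests on Lemma \ref{lemma: holomorphic continuation}, i.e.\ on the Schwarz function and the analytic continuation of $\fii$ across $\T$; the identity on $\T$ is then conjugated and continued into $\D$. You instead never leave the open disc: feeding $g_k=\lambda^{k-1}/\fii'(\lambda)$ into the pulled-back identity and using Bergman-space orthogonality gives $\nu(\lambda^{k-1}/\fii')=\overline{a_k}$, and summing the geometric series at a fixed interior point $z$ (legitimate because $\supp\nu\Subset\D$, so a compactly supported distribution may be applied termwise) yields \eqref{nonlin} directly. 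This buys you independence from any boundary regularity of $\fii$ --- Lemmas \ref{lemma: holomorphic continuation} and \ref{Lemma: CauchyTransform} are not needed --- while the paper's boundary route has the side benefit of exhibiting the Schwarz function, which it reuses later. For the converse, where the paper only says ``read backwards'', you supply a genuine argument: the Green's-formula/residue computation on $r\D$ with $r\to1$ showing $\Cauchy\1_\Omega=\Cauchy\mu$ off $\overline\Omega$, plus the observation (absent from the paper) that \eqref{nonlin} already forces $\fii$, hence $\Omega$, to be bounded. One small caveat: the Bers-type equivalence between the quadrature identity and equality of Cauchy transforms is, in its sharp form, stated on $\C\setminus\Omega$ (poles on $\d\Omega$ allowed), whereas you verify it only on $\Omega^e=\C\setminus\overline\Omega$; since in the converse $\Omega=\fii(\D)$ need not be solid, these can differ. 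This is cosmetic for your proof: your computation goes through verbatim for $z\in\d\Omega$, because $w\mapsto(z-\fii(w))^{-1}$ is still zero-free and holomorphic on $\D$, the right-hand limit only needs $\operatorname{dist}(z,\fii(K))>0$ for a compact neighbourhood $K$ of $\supp\nu$, and on the left $\int_\Omega|z-\zeta|^{-1}\,dA(\zeta)<\infty$ provides the dominating function --- so you may simply replace ``$z\notin\overline\Omega$'' by ``$z\notin\Omega$'' throughout the converse.
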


This result is not very easy to spot in the literature, but it has in fact been noticed earlier in somewhat different guises.
The first proof might be due to Davis, see \cite[Chapter 14]{Davis74}, cf. also \cite[Section 5]{D72}.
Since the result will be central for what follows, we include an alternative proof (that we have found independently) in Section \ref{sec:master}.

In the special case of a pure point-functional $\mu(f)=\sum_1^n c_i f(a_i)$, the relation \eqref{nonlin} takes the form
\begin{equation}\label{meq}\fii(z)=\sum_{i=1}^n \frac {\bar{c}_i}{\bar{w}_i}\frac {z}{1-\bar{\lambda}_i z},\qquad (w_i=\fii'(\lambda_i),\, \fii(\lambda_i)=a_i),\end{equation}
which appears implicitly
in
e.g.~the books \cite{Shapiro1992,Gustafsson2006,Varchenko1992}.

The main idea behind our approach is to ``solve'' functional relations such as \eqref{meq} by using techniques from algebraic geometry. To set up a suitable system of polynomial equations we differentiate \eqref{meq} and substitute $z=\lambda_j$, giving
\begin{equation}\label{meq2}
w_j=\sum_{i=1}^n\frac {\bar{c}_i}{\bar{w}_i}\frac 1 {(1-\lambda_j\bar{\lambda}_i)^2},\qquad
 a_j=\sum_{i=1}^n\frac {\bar{c}_i}{\bar{w}_i}\frac {\lambda_j} {1-\lambda_j\bar{\lambda}_i},\qquad j=1,\ldots,n,\end{equation}
where the unknown complex numbers $\lambda_i$ and $w_i$ are subject to the constraints
\begin{equation}\label{con2}\lambda_1=0,\quad |\lambda_2|<1,\quad \cdots\quad |\lambda_n|<1,\quad w_1>0.\end{equation}

The appearance of inequalities and complex-conjugates means that we are considering the \textit{real} semi-algebraic geometry of a particular system of rational functions. Such semi-algebraic systems, i.e.~those having the special structure of \eqref{meq2}, \eqref{con2} have, to the best of our knowledge, not been systematically studied before.

It is of course possible that no univalent solution to \eqref{meq2}, \eqref{con2} exists; for example the quadrature identity $\int_\Omega f\, dA=f(a_1)+f(a_2)$ implies that $\Omega$ is the disjoint union $D(a_1,1)\cup D(a_2,1)$ if $|a_1-a_2|\ge 2$. However,
after having studied exact solutions for many examples of lower order quadrature domains, we find ``empirically'' the pattern that %for
%a system of order $n$ %which has a univalent solution
the system tends to have at most
%$n$ different formal
one
solution which may or may not give rise to a univalent mapping $\fii$. The non-univalent solutions % , giving rise to non-univalent $\fii$,
fail to be
\textit{locally} univalent, i.e., they (still, empirically) satisfy $\fii'=0$ somewhere in the disc $\D$. For a different type of quadrature domain, not described by \eqref{meq2} and \eqref{con2}, a similar observation was made by Ullemar in \cite{Ullemar1980}.

\begin{remark} The non-univalent solutions $\fii$ are believed to represent quadrature domains on Riemann surfaces with branch points. Such q.d.'s
are studied in the references \cite{GLR,SA88,Skinner}.
\end{remark}

\begin{remark}
Note that our method relies on knowledge
of \textit{all} solutions to \eqref{meq2}, \eqref{con2}. To find one or several approximate solutions, one can of course try to apply numerical methods, such as Newton's iterative
method. By appropriately choosing different initial data, we may indeed obtain
solutions by such methods in a relatively short time for $n$ up to $10$.
However, since the number of solutions to the system is unknown, it is impossible to know when one has found all solutions,
so this kind of information is of no use when studying the uniqueness question for quadrature domains. Another problem with a numerical approach is that systems such as \eqref{meq2}, \eqref{con2} tend to be quite sensitive to small perturbations of the quadrature data $\{c_i,a_i\}_1^n$.
\end{remark}

\begin{remark} In connection with uniqueness problems in the gravi-equivalent sense, it is pertinent to point (besides the sources already mentioned) to the early works of Zidarov and Zhelev in the context of geophysics,
see e.g.~\cite{zidarov1970obtaining,zidarov1973method}. (We thank one of the referees for this remark.)
\end{remark}

The literature on quadrature domains is vast, and we have at this stage omitted to mention several important aspects. A somewhat fuller picture is given
in Section \ref{concrem}, where we briefly compare a few of the more well-known techniques that have been developed
over the years, such as Laplacian growth and Schottky-Klein functions. %In particular, we there compare with the Laplacian growth model and the general mapping problem for multi-connected quadrature domains.

\smallskip

To illustrate the challenges involved in studying the uniqueness of quadrature domains, we now give an example demonstrating the subtlety of the problem
even for a q.d of order 2.

\begin{example}
Let $\Omega_1$ be the solid quadrature domain obtained from a monopole with charge 1/2 and a dipole with strength $\sqrt{3}/18$ placed at the origin, i.e.
\begin{equation*}
	\int_{\Omega_1}f\ dA=\frac{1}{2}f(0)+\frac{\sqrt{3}}{18}f'(0),\qquad f\in AL^1(\Omega_1).
\end{equation*}
For quadrature domains of this type, Aharonov and Shapiro have proved uniqueness in \cite{Aharonov1976}; in fact
$\Omega_1$ is determined as the image of $\D$ under the conformal map $p(z)=\frac{\sqrt{3}}{6}(2z+z^2)$. The boundary $\d\Omega_1$ is a cardioid with
a cusp at $p(-1)=\frac{-1}{\sqrt{12}}$, see Fig. \ref{fig: il ex}.

Let us now construct a similar q.d. $\tilde{\Omega}_2$ but only using point charges,
\begin{equation*}
	\int_{\tilde{\Omega}_2}f\ dA=-\frac{1}{2}f(0)+f(a_2),\qquad f\in AL^1(\tilde{\Omega}_2).
\end{equation*}
Clearly both $\Omega_1$ and $\tilde{\Omega}_2$ have area $1/2$. We shall choose the parameter $a_2$ real, such that the boundary of $\tilde{\Omega}_2$ has a cusp.

Since the parameters $c_i,w_i,\lambda_i$ in \eqref{meq} must be real in this case, the conformal map $\tilde{\phi}:\D\to\tilde{\Omega}_2$ takes the form
\begin{equation}\label{meq7}
	\tilde{\phi}:z\mapsto\frac{c_1z}{w_1}+\frac{c_2}{w_2}\frac{z}{1-\lambda_2z}
\end{equation}
where $c_1=-1/2$ and $c_2=1$, $\lambda_2$ the pre-image of $a_2$, and $w_1=\tilde{\phi}'(0),\ w_2=\tilde{\phi}'(\lambda_1)$.

Writing $\nu=-9+18\,i\sqrt {2}$, we find
that there is a unique choice of $a_2$ producing a cusp, namely
\begin{align*}
	a_2=&\frac 1 {2\,\nu^{1/3}}\,\sqrt{-\nu^{1/3} \left( 3\,i\sqrt {3}\,\nu^{2/3}-27\,i\sqrt {3}+3\, \nu^{2/3}+4\,\nu^{1/3}+27\right)}
\approx 0.1316.
\end{align*}
The remaining parameters in the mapping function may be obtained by solving the system corresponding to \eqref{meq7} using the method of ``RCTD'' described in Section \ref{sec:background}. The result is
\begin{align*}
	\lambda_2&=\frac{3a_2}{\sqrt{3a_2^2+3}}\nonumber
	\approx 0.2261,\nonumber\\
	w_{{2}}&={\frac {-c_{{2}}{\lambda_{{2}}}^{3} \left( -{\lambda_{{2}}}^{2
			}c_{{1}}+{\lambda_{{2}}}^{2}c_{{2}}+{a_{{2}}}^{2} \right) }{a_{{2}}
			\left( -c_{{1}}{\lambda_{{2}}}^{6}+c_{{2}}{\lambda_{{2}}}^{6}+{
				\lambda_{{2}}}^{4}{a_{{2}}}^{2}+2\,c_{{1}}{\lambda_{{2}}}^{4}-2\,{
				\lambda_{{2}}}^{2}{a_{{2}}}^{2}-{\lambda_{{2}}}^{2}c_{{1}}-{\lambda_{{2}}}^{2}c_{{2}}+{a_{{2}}}^{2} \right) }}\nonumber\approx 0.6674,\nonumber\\
	w_1&={\frac {c_{{1}}\lambda_{{2}} \left( {\lambda_{{2}}}^{2}-1 \right) w_{{
					2}}}{w_{{2}}{\lambda_{{2}}}^{2}a_{{2}}-w_{{2}}a_{{2}}+\lambda_{{2}}c_{
				{2}}}}\nonumber\approx 0.5016.
\end{align*}
This gives a cusp at
$\tilde{\phi}(-1)=-\frac{c_1}{w_1}-\frac{c_2}{w_2}\frac{1}{1+\lambda_2}\nonumber \approx -0.2253$.

\begin{figure}[!t]
	\centering
	\includegraphics[width=0.6\textwidth]{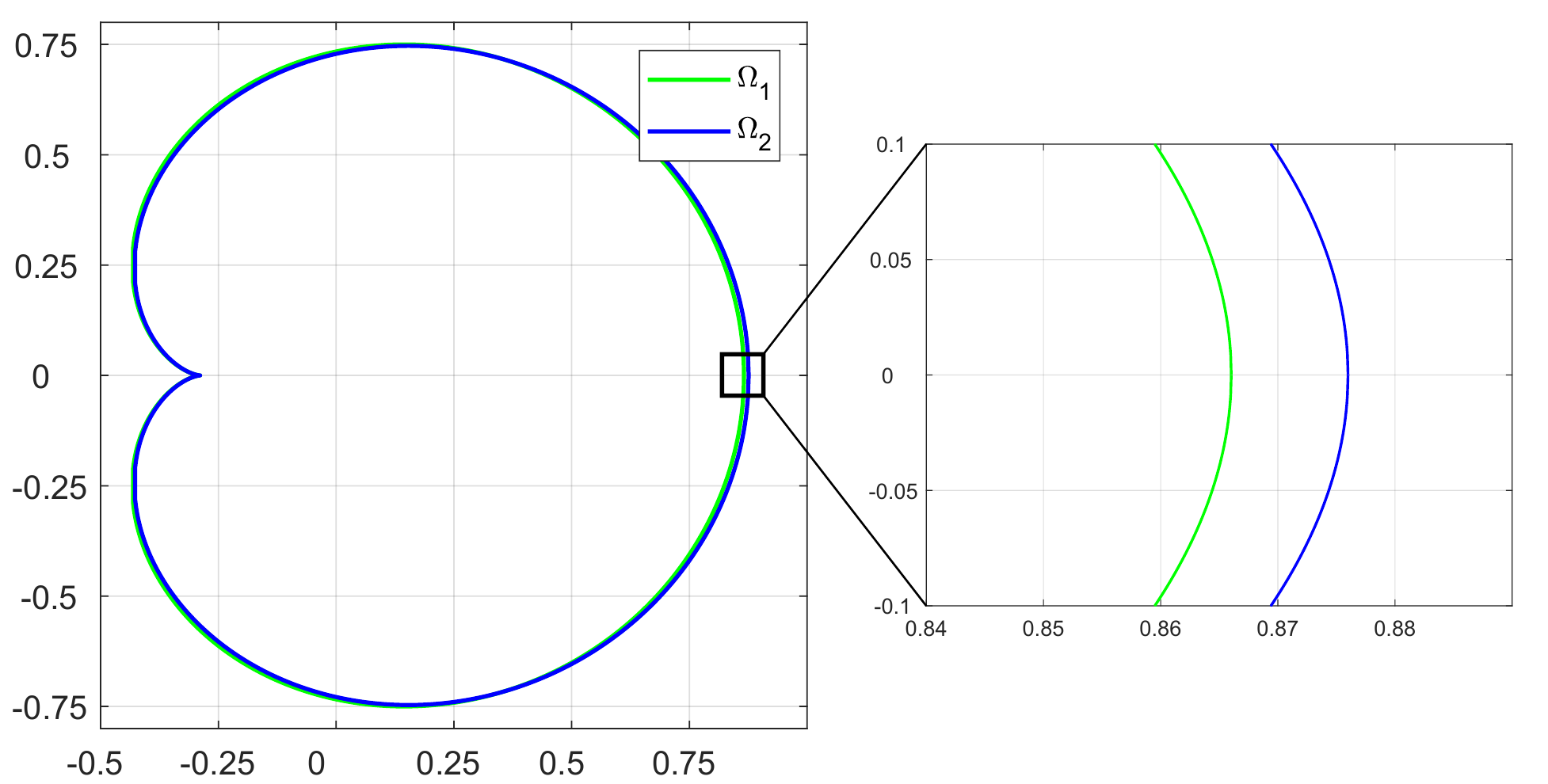}
	\caption{The domains $\Omega_1$ and $\Omega_2$.}
	\label{fig: il ex}
\end{figure}

A translate of $\tilde{\Omega}_2$ by $\alpha:=p(-1)-\tilde{\phi}(-1)\approx -0.06333$ leads to a domain $\Omega_2=\tilde{\Omega}_2+\alpha$ having a cusp at the point $p(-1)$ and satisfying the quadrature identity
\begin{equation*}
	\int_{\Omega_2}f\ dA=-\frac{1}{2}f(\alpha)+f(a_2+\alpha),\qquad f\in AL^1(\Omega_2).
\end{equation*}

The resemblance between $\Omega_1$ and $\Omega_2$ (Fig. \ref{fig: il ex}) is striking, even though they admit completely different quadrature identities.
 The similarity between $\Omega_1$ and $\Omega_2$ indicates that their potentials should be similar, and in terms of numerical values they are. But there is one essential difference between the two: the potential of $\Omega_1$ is exactly determined by two terms in its multipole expansion while the potential for $\Omega_2$ needs the entire infinite series. In detail we have
\begin{align*}
	 U^{\Omega_1}(z)&=\frac{1/2}{2\pi}\log\frac{1}{|z|}+\frac{\sqrt{3}/18}{2\pi}\mathrm{Re}\left(\frac{1}{z}\right)\\
	 U^{\Omega_2}(z)&\approx\frac{1/2}{2\pi}\log\frac{1}{|z|}+\frac{0.09998}{2\pi}\mathrm{Re}\left(\frac{1}{z}\right)+\ldots
\end{align*}
and for comparison note $\sqrt{3}/18\approx0.09623$.
From this example we see that two very similar domains may have fundamentally different potentials.
\end{example}

\section{The master formula}\label{sec:master}

As previously stated, Theorem \ref{theorem: structure of univalent function} appears (in equivalent form) in \cite[Eq. (14.12)]{Davis74}. We shall here give a different derivation.

\smallskip

Consider the univalent map $\fii:\D\to\Omega$ normalized by $\fii(0)=0$ and $\fii'(0)>0$ where $\Omega\in Q(\mu,AL^1)$ and where the distribution
$\mu$ is assumed to be of compact support in $\Omega$. Our point of departure is
Poisson's equation
\begin{equation*}\label{eq: Poisson}\Delta U^\mu=-\mu.\end{equation*}

Taking the distributional $\d$-derivative of $-4U^\mu$ we obtain the \emph{Cauchy transform}
\begin{equation*}\label{ct}
\mathcal{C}\mu(z):=\mu*k(z)=\mu(k_z),
\end{equation*}
where $k(\lambda):=\lambda^{-1}$ denotes the \textit{Cauchy kernel} and $k_z(\lambda):=k(z-\lambda)$.
Since $-4\d U^\mu=\calC\mu$, \eqref{eq: Poisson}
says that
$\dbar\calC\mu=\mu$. In particular $\calC\mu$ is holomorphic on
$\mathbb{C}\setminus\supp \mu$.
Taking $f=k_z$ $(z\not\in \Omega)$ in the quadrature identity \eqref{qi} we see that
\begin{equation}\label{qua}\d U^\Omega=\d U^\mu\qquad \text{on}\qquad \C\setminus\Omega.\end{equation}
In fact, an application of Bers' approximation theorem from \cite{B65} shows that \eqref{qua} is equivalent to \eqref{qi}.
Now consider the ``Schwarz potential'' $u$ defined by
\begin{equation*}
u=\mathcal{C}(\1_\Omega-\mu)=-4\partial(U^\Omega-U^\mu),
\end{equation*}
which is zero for $z\in\Omega^e$. Using the continuity of $\partial U^\Omega$
we get $u=0$ also on $\partial\Omega$. Moreover, Poisson's equation gives
$\dbar u=\1_\Omega$ on $\CC\setminus\supp \mu$.
Hence the function
$$S(z):=\bar{z}-u(z)$$
is holomorphic on $\Omega\setminus\mathrm{supp}\ \mu$ and continuous up to the boundary $\partial\Omega$, while satisfying $S(z)=\bar{z}$ for $z\in\d\Omega$.
This determines $S$ as
 the \emph{Schwarz function} for the boundary curve $\d \Omega$, cf. \cite{Davis74,Shapiro1992}.

The following lemma is well-known, see e.g. \cite{Davis74,Shapiro1992}.

\begin{lemma}\label{lemma: holomorphic continuation}
The conformal mapping $\phi:\mathbf{D}\to\Omega$ extends holomorphically across $\mathbf{T}$ to an analytic function on the disk $D(0,R)$ for some $R>1$.
\end{lemma}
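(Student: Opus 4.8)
The plan is to exploit the Schwarz function $S$ constructed just above and the symmetry principle for conformal maps. First I would recall that $S$ is holomorphic on $\Omega\setminus\supp\mu$, continuous up to $\d\Omega$, and satisfies $S(z)=\bar z$ on $\d\Omega$. Since $\fii$ maps $\T$ onto $\d\Omega$, the composition $z\mapsto S(\fii(z))$ is defined and continuous on an annular neighborhood $\{r<|z|\le 1\}$ inside $\D$ (shrinking $r$ so that $\fii(\{r<|z|<1\})$ avoids $\supp\mu$, which is possible because $\supp\mu$ is compact in $\Omega$), holomorphic in its interior, and on $\T$ it equals $\overline{\fii(z)}=\overline{\fii(1/\bar z)}$. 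Introduce the anti-holomorphic reflection $\fii^*(z):=\overline{\fii(1/\bar z)}$, which is holomorphic on $\{|z|>1\}$ (including $\infty$, with a pole of order one coming from $\fii'(0)\ne 0$, or more precisely from the simple zero of $\fii$ at $0$). Then $S\circ\fii$ and $\fii^*$ agree on $\T$, so by Morera/continuity they glue to a single holomorphic function on a full two-sided neighborhood of $\T$; call it $\Psi$. In particular $\fii^*$, hence $\fii$, extends holomorphically across $\T$.

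The key steps in order: (1) choose $r<1$ with $\fii(\{r<|z|<1\})\cap\supp\mu=\emptyset$, using compactness of $\supp\mu$ in $\Omega$ and continuity of $\fii$ up to $\T$ (the latter itself following from solidity of $\Omega$, e.g.\ by Carathéodory's theorem, since $\d\Omega=\d(\Omega^e)$); (2) verify that $g(z):=S(\fii(z))$ is holomorphic on $\{r<|z|<1\}$ and continuous on $\{r<|z|\le 1\}$, with boundary values $g(z)=\overline{\fii(z)}$ on $\T$; (3) observe $\overline{\fii(z)}=\fii^*(z)$ for $z\in\T$, where $\fii^*$ is holomorphic on $\{|z|>1\}\cup\{\infty\}$ (meromorphic with a simple pole at $\infty$); (4) apply the Schwarz reflection / Morera argument to conclude that $g$ and $\fii^*$ are restrictions of one holomorphic function $\Psi$ on an annulus $\{r<|z|<1/r\}$; (5) deduce $\fii(z)=\overline{\Psi(1/\bar z)}$ near $\T$ from inside, so $\fii$ extends holomorphically to $\{|z|<1/r\}=D(0,R)$ with $R=1/r>1$.

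I expect the main obstacle to be step (1)–(2): one must be careful that $\fii$ genuinely extends \emph{continuously} to $\overline\D$ and is injective on $\T$ up to the boundary behavior allowed for solid domains, so that $S\circ\fii$ is well-defined and has the stated boundary values. This is where the hypothesis that $\Omega$ is solid (Carathéodory) is used, ensuring $\fii$ extends continuously to $\overline{\D}$; the gluing in steps (3)–(5) is then the standard Schwarz symmetry argument and is routine. A secondary point to check is that $\fii^*$ has only a pole (not an essential singularity) at $\infty$, which is immediate since $\fii$ has a simple zero at $0$ and no other zeros in $\D$ (injectivity), so $\fii^*(z)=\overline{\fii(1/\bar z)}$ behaves like $\overline{\fii'(0)}/z$ near $\infty$; this guarantees $\Psi$ is holomorphic (not merely meromorphic) on the annulus once we subtract nothing — in fact the pole of $\fii^*$ lies at $\infty$, outside the annulus $\{r<|z|<1/r\}$, so no subtraction is needed.
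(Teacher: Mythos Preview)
Your proposal is correct and follows essentially the same route as the paper: both arguments use the Schwarz function $S$ to match $S\circ\fii$ on an inner annulus with the reflected map on the exterior (your $\fii^*(z)=\overline{\fii(1/\bar z)}$ is exactly the paper's $\phi^*(1/z)$ where $\phi^*(z)=\overline{\phi(\bar z)}$), then glue across $\T$ by a Morera/continuation argument. Your write-up is somewhat more careful than the paper's in justifying the continuous extension of $\fii$ to $\overline{\D}$ via Carath\'eodory's theorem and in noting that the pole of $\fii^*$ at $\infty$ lies outside the gluing annulus, but the underlying idea is identical.
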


\begin{proof}
As we saw above, the function $S\circ\phi$ is defined and holomorphic in some annulus $1-\epsilon<|z|<1$, continuous up to the boundary and satisfies $S(\phi(z))=\overline{\phi(z)}$ when $z\in\mathbf{T}$.
Likewise, the function $\phi^*(z)=\overline{\phi(\overline{z})}$ is holomorphic in $\mathbf{D}$ and continuous up to the boundary, and we have the relation
	\begin{equation*}
	S(\phi(z))=\phi^*(1/z),\qquad z\in\mathbf{T}.
	\end{equation*}
	Now, $\phi^*(1/z)$ is holomorphic in the exterior of $\mathbf{D}$, so the above formula shows that the functions $S\circ\phi(z)$ and $\phi^*(1/z)$ are analytic continuations of each other across the circle $\mathbf{T}$. In particular, $\phi^*(1/z)$ is analytically continuable inwards across $\mathbf{T}$, which means that $\phi^*$ as well as $\phi$ are analytically continuable outwards across $\mathbf{T}$, to some disc $D(0,R)$ with $R>1$.
\end{proof}

\begin{lemma}\label{Lemma: CauchyTransform} We have that
$
	\mathcal{C}\left[\overline{\phi'}\cdot\1_\mathbf{D}\right](z)=\overline{\phi(z)}
,\, z\in\mathbf{T}.$
\end{lemma}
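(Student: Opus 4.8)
The plan is to reduce the identity to the generalized Cauchy (Cauchy--Pompeiu) formula, so that the only genuine content becomes the vanishing of a boundary integral. First I would fix normalizations: for $g\in L^1(\mathbf C)$ with compact support in $\overline{\mathbf D}$ one has $\mathcal C[g](z)=\int_{\mathbf C}\frac{g(\lambda)}{z-\lambda}\,dA(\lambda)$, and I would note that $g:=\overline{\phi'}\cdot\1_{\mathbf D}$ is bounded with compact support --- indeed $\overline{\phi'}$ extends continuously (even real-analytically) to $\overline{\mathbf D}$ by Lemma~\ref{lemma: holomorphic continuation} --- so that $\mathcal C[g]$ is well defined and \emph{continuous} on all of $\mathbf C$ (the Cauchy transform of a bounded, compactly supported density is continuous, since $1/|z-\lambda|$ is locally $L^p$ for $p<2$). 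Since $\overline{\phi(z)}$ is trivially continuous across $\mathbf T$, it suffices to prove the identity for $z$ in the open disc $\mathbf D$ and then pass to $\mathbf T$ by continuity.

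Second, I would apply the Cauchy--Pompeiu formula to $h(\lambda):=\overline{\phi(\lambda)}$, which is $C^1$ on $\overline{\mathbf D}$ by Lemma~\ref{lemma: holomorphic continuation} and satisfies $\dbar h=\overline{\phi'}$. For $z\in\mathbf D$ this reads
\[
\overline{\phi(z)}=\frac1{2\pi i}\oint_{\mathbf T}\frac{\overline{\phi(\lambda)}}{\lambda-z}\,d\lambda+\int_{\mathbf D}\frac{\overline{\phi'(\lambda)}}{z-\lambda}\,dA(\lambda),
\]
where the factor $\tfrac1\pi$ concealed in $dA$ is exactly the one appearing in the area term of Cauchy--Pompeiu, so the second term equals $\mathcal C\bigl[\overline{\phi'}\cdot\1_{\mathbf D}\bigr](z)$. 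Thus the lemma reduces to checking that the boundary integral vanishes for every $z\in\mathbf D$.

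Third, to kill the boundary term I would use that on $\mathbf T$ one has $\bar\lambda=1/\lambda$, hence $\overline{\phi(\lambda)}=\phi^*(1/\lambda)$ with $\phi^*(w):=\overline{\phi(\bar w)}$. By Lemma~\ref{lemma: holomorphic continuation}, $\phi^*$ is holomorphic on some disc $D(0,R)$ with $R>1$, so $\lambda\mapsto\phi^*(1/\lambda)$ is holomorphic on $\{\,|\lambda|>1/R\,\}$ --- in particular on the whole closed exterior $\{\,|\lambda|\ge1\,\}\cup\{\infty\}$ of $\mathbf T$, since $R>1$ --- and the normalization $\phi(0)=0$ forces $\phi^*(1/\lambda)\to0$ as $\lambda\to\infty$, so that $\frac{\phi^*(1/\lambda)}{\lambda-z}=O(1/\lambda^2)$ there (recall $|z|<1$). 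Deforming the contour $\mathbf T$ outward to a large circle $\{|\lambda|=M\}$ --- legitimate since the integrand is holomorphic in the annulus $1\le|\lambda|<\infty$ --- and letting $M\to\infty$ gives $\frac1{2\pi i}\oint_{\mathbf T}\frac{\overline{\phi(\lambda)}}{\lambda-z}\,d\lambda=\frac1{2\pi i}\oint_{\mathbf T}\frac{\phi^*(1/\lambda)}{\lambda-z}\,d\lambda=0$. Combined with the previous display, $\overline{\phi(z)}=\mathcal C\bigl[\overline{\phi'}\cdot\1_{\mathbf D}\bigr](z)$ for $z\in\mathbf D$, and the claimed equality on $\mathbf T$ follows by the continuity noted in the first step.

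I expect this last step --- the vanishing of the boundary integral --- to be the main obstacle, and it is precisely where both hypotheses enter: the holomorphic continuation of $\phi$ across $\mathbf T$ (Lemma~\ref{lemma: holomorphic continuation}), which makes $\phi^*(1/\lambda)$ holomorphic in a neighbourhood of $\mathbf D^e\cup\{\infty\}$, and the normalization $\phi(0)=0$, which makes it vanish at infinity; everything else is bookkeeping with the $\tfrac1\pi$ in $dA$. If one prefers to avoid the fact that $\mathcal C[g]$ is continuous, the same contour deformation carried out for $z\in\mathbf D^e$ --- now applying the $\dbar$-Green formula to $\lambda\mapsto\overline{\phi(\lambda)}/(z-\lambda)$, which is smooth on $\overline{\mathbf D}$ because $z\notin\overline{\mathbf D}$ --- gives $\mathcal C\bigl[\overline{\phi'}\cdot\1_{\mathbf D}\bigr](z)=\phi^*(1/z)$ for $z\in\mathbf D^e$, and letting $z\to\mathbf T$ from outside recovers the statement since $\phi^*(1/z)\to\overline{\phi(z)}$ on $\mathbf T$. (One could equally run the argument directly on $\mathbf T$ by integrating over circles $\{|\lambda|=r\}$ with $r\uparrow1$.)
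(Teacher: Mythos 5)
Your proof is correct, but it follows a different route from the paper's. The paper argues by brute force with power series: it evaluates $\mathcal{C}\bigl[\overline{\phi'}\cdot\mathbf{1}_{\mathbf{D}}\bigr]$ at interior points $rz$, expands $\phi(\lambda)=\sum c_j\lambda^j$ (with $\sum|c_j|<\infty$ thanks to Lemma \ref{lemma: holomorphic continuation}) together with the geometric series for the kernel, integrates termwise, and lets $r\uparrow 1$. You instead apply the Cauchy--Pompeiu formula to $h=\overline{\phi}$ (using $\bar\partial\,\overline{\phi}=\overline{\phi'}$), kill the boundary integral by deforming $\mathbf{T}$ outward, using that $\lambda\mapsto\phi^*(1/\lambda)$ is holomorphic on $\{|\lambda|>1/R\}$ (again Lemma \ref{lemma: holomorphic continuation}) and vanishes at infinity because $\phi(0)=0$, and then pass from $\mathbf{D}$ to $\mathbf{T}$ via continuity of the Cauchy transform of a bounded, compactly supported density. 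Both arguments hinge on the analytic continuation lemma; yours buys a bit more (the identity on all of $\overline{\mathbf{D}}$, and as a by-product $\mathcal{C}\bigl[\overline{\phi'}\cdot\mathbf{1}_{\mathbf{D}}\bigr](z)=\phi^*(1/z)$ in the exterior, which is essentially the Schwarz-function statement the paper exploits later), while the paper's series computation is more elementary and self-contained, needing no facts about the Cauchy transform beyond termwise integration. One small caveat: your closing parenthetical does not really ``avoid'' continuity of $\mathcal{C}[g]$ --- taking the limit $z\to\mathbf{T}$ from outside and identifying it with the value \emph{on} $\mathbf{T}$ is again a boundary-continuity statement --- but since your main argument establishes and uses that continuity explicitly, this is a cosmetic point, not a gap.
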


\begin{remark} For an absolutely continuous measure $\nu=f\, dA$, we prefer to denote its Cauchy transform by $\Cauchy f$ rather than $\Cauchy \nu$.
\end{remark}

\begin{proof}[Proof of Lemma \ref{Lemma: CauchyTransform}]
	Fix a point $z=e^{i\theta}\in\mathbf{T}$ and a positive number $r<1$ and put
	\begin{align*}
	I_r=\,&\mathcal{C}\left[\overline{\phi'}\cdot \1_\mathbf{D}\right](rz)=\int_\mathbf{D}\frac{\overline{\phi'(\lambda)}}{re^{i\theta}-\lambda}\  dA(\lambda) = re^{-i\theta}\int_{r\mathbf{D}}\frac{\overline{\phi'(r\zeta e^{i\theta})}}{1-\zeta}\ dA(\zeta).
	\end{align*}

Set $
	\phi(\lambda)=\sum_{j=0}^\infty c_j\lambda^j$, where
$\sum_{j=0}^\infty |c_j|<\infty$ by Lemma \ref{lemma: holomorphic continuation}.
Inserting the expansion $\frac 1 {1-\zeta}=\sum\zeta^j$ we find that
	\begin{align*}
	I_r
=\sum_{j=1}^\infty\bar{c}_j\left(re^{-i\theta}\right)^jr^{2j}.
	\end{align*}
Since $\sum |c_j|<\infty$ we may pass to the limit as $r\nearrow 1$, leading to
	\begin{align*}
	\lim_{r\to 1}I_r&=\lim_{r\to 1}\sum_{j=1}^\infty\bar{c}_j\left(re^{-i\theta}\right)^jr^{2j}=\sum_{j=1}^\infty\bar{c}_j\left(e^{-i\theta}\right)^j=\overline{\phi(z)}.
	\end{align*}
The proof of the lemma is complete.
\end{proof}

\begin{proof}[Proof of Theorem \ref{theorem: structure of univalent function}]
The quadrature identity \eqref{qi} pulls back to
	\begin{equation}\label{laeq}
	\int_\Omega f\ dA=\int_\mathbf{D}(f\circ\phi)\cdot|\phi'|^2\ dA=\nu(f\circ\phi)
	\end{equation}
where $\nu(g)=\mu(g\circ\phi^{-1})$.

Given an arbitrary $f\in AL^1(\Omega)$ we define a function $g\in AL^1(\D)$ by
$$g=(f\circ\phi)\cdot\phi'.$$ The identity \eqref{laeq} can be written as
	\begin{equation}\label{eq: integral}
	\int_\mathbf{D}g\overline{\phi'}\ dA=\nu\left(\frac{g}{\phi'}\right).
	\end{equation}
Now fix a point $z\in\T$ and choose $g$ to be the Cauchy-kernel $g=k_z$.
With this choice, \eqref{eq: integral} takes the form
	\begin{equation*}
	\mathcal{C}\left[\overline{\phi'}\cdot\1_\mathbf{D}\right](z)=\Cauchy\left[ \frac 1 {\fii'}\cdot \nu\right](z)
,\qquad z\in\T.
	\end{equation*}
	By Lemma \ref{Lemma: CauchyTransform} this is equivalent to
	\begin{equation*}
	\overline{\phi(z)}=\mathcal{C}\left[\frac{1}{\phi'}\cdot \nu\right](z)=\nu_\lambda\left(\frac{1}{\phi'(\lambda)(1/\bar{z}-\lambda)}\right),\qquad z\in\T.
	\end{equation*}
Taking complex-conjugates and considering the analytic continuation to $\D$ we obtain
\begin{equation}\label{end}\fii(z)=\nu_\lambda^*\left[\frac{z}{\overline{\phi'(\lambda)}(1-z\bar{\lambda})}\right],\qquad (z\in\D),\end{equation}
as desired.

Conversely, if $\fii$ is univalent (and normalized) in $\D$ and satisfies \eqref{end}, we may read backwards and deduce \eqref{laeq}, so $\Omega=\fii(\D)$
belongs to $Q(\mu,AL^1)$ where $\mu$ is the push-forward of $\nu$.
\end{proof}

We conclude this section with three examples of applications of Theorem \ref{theorem: structure of univalent function}, which are known from the literature on quadrature domains.

\begin{example} Let us compare Theorem \ref{theorem: structure of univalent function} with the computations in Shapiro's book \cite[Proposition 3.2]{Shapiro1992}.
For this purpose we fix a pure point functional
$\mu=\sum_{i=1}^n c_i\delta_{a_i}$, which pulls back to
\begin{equation*}
\nu(g)=\mu(g\circ\varphi^{-1})=\sum_{i=1}^n c_ig(\varphi^{-1}(a_i))=\sum_{i=1}^n c_ig(\lambda_i),\qquad (\lambda_i=\fii^{-1}(a_i)).
\end{equation*}
By Theorem \ref{theorem: structure of univalent function} we know that an arbitrary solid domain $\Omega\in Q(\mu,AL^1)$ is of the form
$\Omega=\fii(\D)$ where $\fii$ is univalent and normalized and satisfies
\begin{align}\label{bsys}
\varphi(z)
=\sum_{i=1}^n\frac{\overline{c}_i}{\overline{\varphi'(\lambda_i)}}\frac{z}{1-z\overline{\lambda}_i}
\end{align}
\end{example}

Given such a $\fii$, we put $S(\phi(z)):=\phi^*(1/z)$ and note that $S$ is the Schwarz function for $\d\Omega$.
In view of \eqref{bsys}, $S$ is a meromorphic function with
simple poles at $z=a_j$. As $z\to\lambda_j$ the dominant term in $S\circ\phi$ satisfies
\begin{equation*}
S(\phi(z))\sim\frac{c_j}{\phi'(\lambda_j)}\frac{1}{z-\lambda_j}.
\end{equation*}
From this we get that (as $z\to a_j$)
\begin{equation*}
S(z)\sim\frac{c_j}{\phi'(\lambda_j)}\frac{1}{\phi^{-1}(z)-\lambda_j}=\frac{c_j}{\phi'(\lambda_j)}\frac{z-a_j}{\phi^{-1}(z)-\phi^{-1}(a_j)}\frac{1}{z-a_j}\sim \frac{c_j}{z-a_j}.
\end{equation*}
The residues of $S$ are thus just Res$(S;a_j)=c_j$ for all $j$.
Since $S(z)=\bar{z}$ on $\d\Omega$, an application of Green's theorem and the Residue theorem now gives
\begin{align*}
\int_\Omega f\ dA&=\frac{1}{2\pi i}\int_\Gamma f(z)\overline{z}\ dz=\frac{1}{2\pi i}\int_\Gamma f(z)S(z)\ dz = \sum c_jf(a_j)
\end{align*}
where $\Gamma$ is the positively oriented boundary of $\Omega$. We have shown again that $\Omega\in Q(\mu,AL^1)$.

We remark that a similar proof applied to a more general point functional $\mu(f)=\sum c_{ij}f^{(j)}(a_i)$ gives the well known result (see \cite[Theorem 3.3.1]{Gustafsson2014}) that a solid domain $\Omega$ is a quadrature domain of finite order if and only if
each conformal map $\fii:\D\to\Omega$ is a rational function, if and only if the Schwarz function of $\d\Omega$ extends to a meromorphic function in $\Omega$.

\begin{example} \label{Example: monopole and dipole}
Let $\mu$ be a linear combination of a monopole and a dipole at the origin, i.e. $\mu(f)=M_0f(0)+M_1f'(0)$. The action of the pullback is then given by
\begin{align*}
\nu(g)
=\mu(g\circ\varphi^{-1})
=M_0g(0)+\frac{M_1}{\varphi'(0)}g'(0),\qquad g\in AL^1(\D).
\end{align*}
Applying Theorem \ref{theorem: structure of univalent function}, we find that a normalized conformal map $\fii:\D\to\Omega$, where $\Omega\in Q(\mu,AL^1)$, must satisfy
\begin{equation*}\label{eq: polynomial map deg two}
\overline{\varphi(z)}=\left(\frac{1}{\varphi'(0)}M_0-\frac{\varphi''(0)}{\varphi'(0)^3}M_1\right)\bar{z}\nonumber+\frac{M_1}{\varphi'(0)^2}\bar{z}^2.
\end{equation*}
Hence $\fii(z)$ is a polynomial of degree two. To determine this polynomial, we need to determine the derivatives $\fii'(0)$, $\fii''(0)$. The computation is
 postponed to Subsection \ref{section:AS}, after we have discussed some algebraic prerequisites.
\end{example}

\begin{example}\label{ulc} Following Davis \cite[pp. 162-166]{Davis74} we now take
$\mu$ be a line charge with linear density $h$ on the segment $[-a,a]$ of the $x$-axis, i.e., $$\displaystyle \mu(f)=\int_{-a}^af(x)h(x)\ dx.$$
Suppose that $\Omega=\fii(\D)\in Q(\mu,AL^1)$;
the pullback $\nu$ by $\fii$ is then given by
\begin{equation*}
\nu(g)=\mu(g\circ\varphi^{-1})=\int_{-a}^{a}g(\varphi^{-1}(x))h(x)\ dx=\int_{-\lambda}^\lambda g(w)h(\fii(w))\fii'(w)\, dw,\qquad (\lambda=\varphi^{-1}(a)).
\end{equation*}
Applying Theorem \ref{theorem: structure of univalent function} we see that $\fii$ must satisfy the functional equation
(cf. \cite[Eq. (14.25)]{Davis74})
\begin{equation*}
\overline{\varphi(z)}=\bar{z}\int_{-\lambda}^\lambda \frac {h(\fii(w))}{1-\bar{z}w}\, dw.\end{equation*}
In particular, if we specialize to a uniform line charge $h\equiv 1$, we obtain
\begin{equation*}\fii(z)=\log\left(\frac{1+z\lambda}{1-z\lambda}\right).
\end{equation*}
This relation was found by Davis (see Eq. (14.13)), where it is also shown that if $a=1$ then $\lambda=\sqrt{\tanh(1/2)}=.6798\cdots<1$. It is then easy to
see that the above map $\fii$ is well-defined by choosing the standard branch of the logarithm. We have shown that there exists a unique solid domain $\Omega$ in the class $Q(\1_{[-1,1]}(x)\, dx,AL^1)$; a picture is given in Figure \ref{Figure3}.
\begin{figure}
\includegraphics[width=.4\textwidth]%[height=0.8\textheight]
{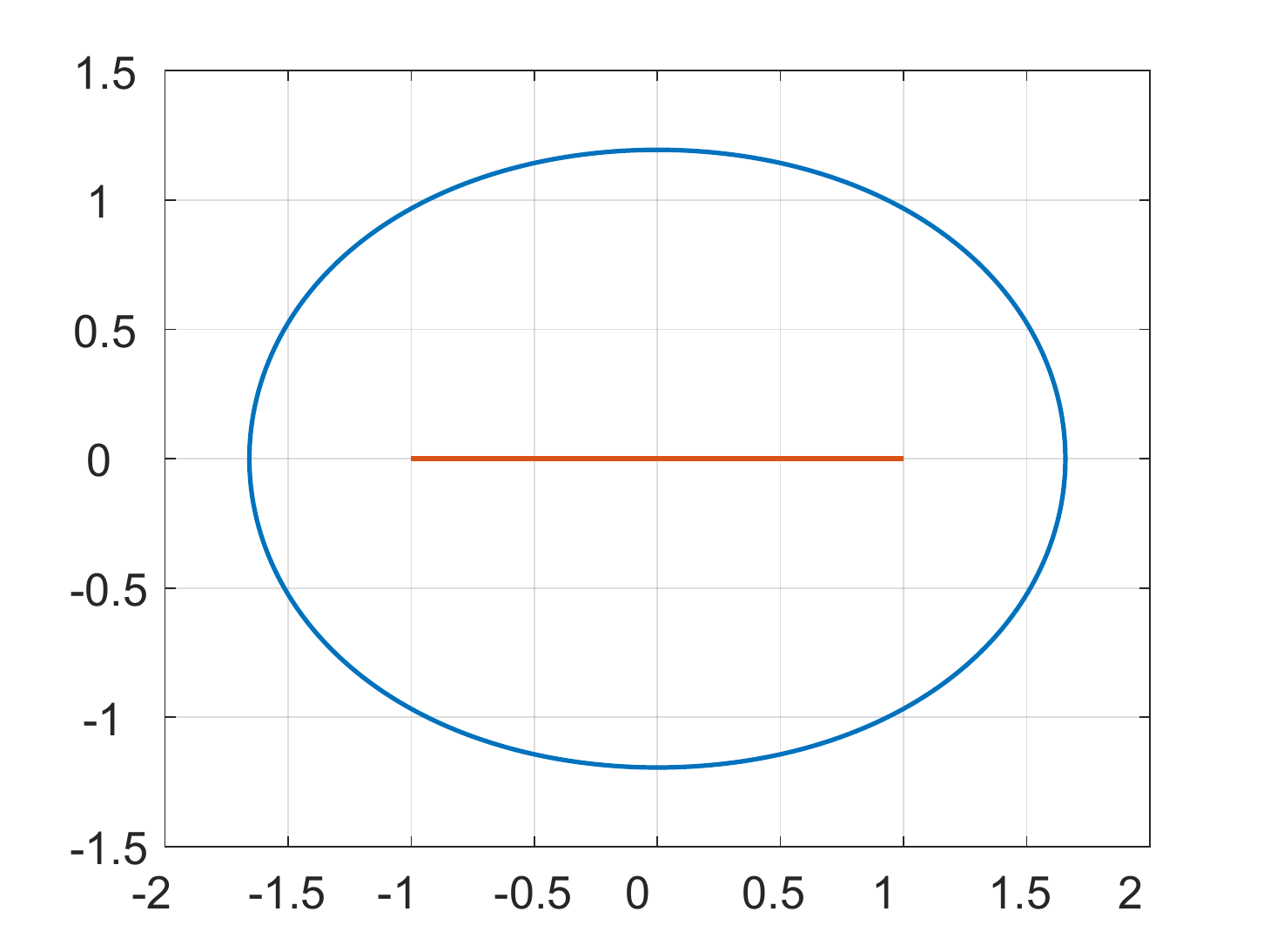}
\caption{The q.d. $\Omega$ generated by the linear density $d\mu(x)=\1_{[-1,1]}(x)\, dx$.}
\label{Figure3}
\end{figure}
\end{example}

\section{Schur-Cohn's test}\label{Section:SchurCohn}

As stated earlier, the mapping problem \eqref{meq} may have non-univalent solutions $\fii$.
The Schur-Cohn test provides a convenient way of discarding such
$\fii$ that fail to be \textit{locally} univalent.
One might hope that this procedure would leave us with at most one univalent $\fii$, thus settling the
uniqueness problem. As we will see later, this is indeed the case for a large class of examples.

The Schur-Cohn test is well known and quite elementary, see \cite{Henrici74}.
For reasons of completeness, we have found it convenient to briefly review the main ideas behind it here.
In what follows, we let $\mathcal{P}_n$ denote the space of all polynomials $p$ of degree at most $n$,
\begin{equation*}\label{eq: polynomial}
p(z)=a_0+a_1z+\ldots+a_nz^n,\qquad a_0,\ldots,a_n\in\mathbb{C}.
\end{equation*}
\begin{lemma}\label{lemma: SC} If $n\ge 1$ then

(i) if $|a_0|>|a_1|+\ldots+|a_n|$ then $ p(z)\neq 0$ for all $z$ with $|z|\le 1$,

(ii) if $ p(z)\neq 0\ \mathrm{for\ all}\ z\ \mathrm{with}\ |z|\le 1$ then $|a_0|>|a_n|$.
\end{lemma}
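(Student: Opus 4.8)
The two parts are essentially independent, and each is a one-line consequence of a standard fact: the triangle inequality for (i), and the factorization of a polynomial into linear factors for (ii). Only the degenerate case $a_n=0$ in (ii) requires a moment's attention. I would present it as follows.

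For (i), I would estimate the non-constant part of $p$ on the closed unit disc directly. If $|z|\le 1$ then $|z^k|\le 1$ for every $k\ge 1$, so
\begin{equation*}
\Bigl|\sum_{k=1}^{n}a_k z^k\Bigr|\le \sum_{k=1}^{n}|a_k|\,|z|^k\le \sum_{k=1}^{n}|a_k|<|a_0|,
\end{equation*}
and hence $|p(z)|\ge |a_0|-\bigl|\sum_{k=1}^{n}a_k z^k\bigr|>0$. This gives $p(z)\ne 0$ for all $z$ with $|z|\le 1$.

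For (ii), suppose $p$ has no zeros in the closed disc $\{|z|\le 1\}$. Then in particular $a_0=p(0)\ne 0$. If $a_n=0$ there is nothing more to show, since then $|a_n|=0<|a_0|$. If $a_n\ne 0$, then $p$ has degree exactly $n$ and factors as $p(z)=a_n\prod_{k=1}^{n}(z-z_k)$, where $z_1,\dots,z_n\in\CC$ are its zeros listed with multiplicity. By hypothesis each $z_k$ lies outside the closed unit disc, i.e.\ $|z_k|>1$. Setting $z=0$ yields $a_0=a_n\prod_{k=1}^{n}(-z_k)$, so
\begin{equation*}
|a_0|=|a_n|\prod_{k=1}^{n}|z_k|>|a_n|,
\end{equation*}
where strictness uses that $n\ge 1$ and that every factor exceeds $1$.

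The argument is entirely elementary; there is no real obstacle. The only point that needs care is not to invoke the root-product formula in the case $a_n=0$ of part (ii) — there $p$ has fewer than $n$ roots and one argues instead directly from $a_0=p(0)\ne 0$ that $|a_0|>0=|a_n|$. It is also worth noting that the hypothesis $n\ge 1$ is genuinely used in (ii): for a nonzero constant $p\equiv a_0$ the claimed strict inequality $|a_0|>|a_n|=|a_0|$ would fail.
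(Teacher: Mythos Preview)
Your proof is correct and follows essentially the same approach as the paper's own proof: the triangle-inequality estimate for (i), and for (ii) the case split on whether $a_n=0$, with the root-product formula $a_0=a_n\prod(-z_k)$ in the nondegenerate case. The added remark about the necessity of $n\ge 1$ in (ii) is a nice observation beyond what the paper records.
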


\begin{proof}
	(i) If $|a_0|>|a_1|+\ldots+|a_n|$ then $|a_1z+\ldots+a_nz^n|\le|a_1|+\ldots+|a_n|<|a_0|$ for
 $|z|\le 1$,
	so $|p(z)|\ge |a_0|-|a_1z+\ldots+a_nz^n|>0$.
	
	(ii) Assume $p(z)\neq 0$ for all $z$ with $|z|\le 1$. Then $p(0)=a_0\neq 0$ which leads to two cases: (1)
if $a_n=0$ then obviously $|a_0|>|a_n|$; (2) if $a_n\neq 0$ we have $p(z)=a_n(z-z_1)\cdot\ldots\cdot(z-z_n)$ where $z_n$ are the zeros of $p$. But then $p(0)=a_0=a_n(-z_1)\cdot\ldots\cdot(-z_n)$ and thus $|a_0|>|a_n|$ since $|z_k|>1$ for $k=1,\ldots,n$.
\end{proof}

For each $p\in\mathcal{P}_n$ the \textit{reciprocal polynomial} $p^\#\in\mathcal{P}_n$ is defined by
\begin{equation*}
p^\#(z)=z^n\cdot\overline{p(1/\overline{z})}=\overline{a}_n+\overline{a}_{n-1}z+\ldots+\overline{a}_1z^{n-1}+\overline{a}_0z^n.
\end{equation*}

Let us now define the \emph{Schur transform}, $S_n:\ \mathcal{P}_n\to\mathcal{P}_{n-1}$ by
\begin{equation*}
(S_np)(z)=\overline{a}_0p(z)-a_np^\#(z)=\sum_{k=0}^{n-1}(\overline{a}_0a_k-a_n\overline{a}_{n-k})z^k,\qquad (p\in\mathcal{P}_n).
\end{equation*}

We note a few simple facts pertaining to these objects.

First, $|p^\#|=|p|$ on $\T$. Moreover, every zero of $p$ on $\T$ is also a zero of $p^\#$ and is thus a zero of $S_np$. Finally,
\begin{equation*}
(S_np)(0)=|a_0|^2-|a_n|^2\in\mathbb{R}.
\end{equation*}

We now construct a chain of polynomials $p_0,p_1,\ldots,p_n$ starting with $p_0=p$ and then taking successive Schur transforms,
\begin{equation*}
p_1=S_np_0,\quad p_2=S_{n-1}p_1,\qquad \cdots\qquad p_n=S_1p_{n-1}.
\end{equation*}
The last polynomial $p_n$ is an element in $\mathcal{P}_0$ and thus is a constant.

\begin{lemma}\label{SchurLemma} If $p_k$ has no zeros on the unit circle $\mathbf{T}$ and $p_{k+1}(0)>0$, then $p_k$ and $p_{k+1}$ have equally many zeros in $\overline{\mathbf{D}}$.
\end{lemma}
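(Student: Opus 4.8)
The plan is to compare the zero counts of $p_k$ and $p_{k+1} = S_{n-k}p_k$ by applying the argument principle to a suitable one-parameter homotopy on the unit circle $\mathbf{T}$. Write $n' = \deg p_k \le n-k$ and let $a_0, a_{n'}$ be the extreme coefficients of $p_k$, so that by definition $p_{k+1}(z) = \overline{a}_0\, p_k(z) - a_{n'}\, p_k^{\#}(z)$ and $p_{k+1}(0) = |a_0|^2 - |a_{n'}|^2$. The hypothesis $p_{k+1}(0) > 0$ forces $|a_0| > |a_{n'}|$, and in particular $a_0 \ne 0$ so $p_k(0)\ne 0$; this will let me normalize and divide freely below. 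I would first record the key algebraic identity that drives everything: on $\mathbf{T}$ one has $|p_k^{\#}(z)| = |p_k(z)|$, hence for $|z| = 1$,
\begin{equation*}
|p_{k+1}(z)| \ge |a_0|\,|p_k(z)| - |a_{n'}|\,|p_k^{\#}(z)| = \big(|a_0| - |a_{n'}|\big)\,|p_k(z)| > 0,
\end{equation*}
where the final strict inequality uses $|a_0| > |a_{n'}|$ together with the assumption that $p_k$ has no zero on $\mathbf{T}$. Thus $p_{k+1}$ also has no zeros on $\mathbf{T}$, and both zero counts in $\overline{\mathbf{D}}$ are in fact zero counts in the open disc $\mathbf{D}$, computable as winding numbers of the argument around $\mathbf{T}$.

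Next I would set up the homotopy $q_t(z) = \overline{a}_0\, p_k(z) - t\, a_{n'}\, p_k^{\#}(z)$ for $t \in [0,1]$, so that $q_0 = \overline{a}_0\, p_k$ (which has the same zeros in $\mathbf{D}$ as $p_k$, since $\overline{a}_0 \ne 0$) and $q_1 = p_{k+1}$. The point is that $q_t$ never vanishes on $\mathbf{T}$ for any $t \in [0,1]$: indeed on $|z|=1$,
\begin{equation*}
|q_t(z)| \ge |a_0|\,|p_k(z)| - t\,|a_{n'}|\,|p_k(z)| \ge \big(|a_0| - |a_{n'}|\big)\,|p_k(z)| > 0.
\end{equation*}
Here a small care is needed because $\deg q_t$ could drop for intermediate $t$ — but that does not matter for the argument principle applied on the fixed contour $\mathbf{T}$; what matters is only that $q_t$ is a polynomial (hence analytic inside) and nonvanishing on $\mathbf{T}$. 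By continuity of the winding number $\frac{1}{2\pi i}\oint_{\mathbf{T}} \frac{q_t'(z)}{q_t(z)}\,dz$ as a function of $t$ (an integer-valued continuous function is constant), the number of zeros of $q_t$ inside $\mathbf{D}$ is independent of $t$. Evaluating at $t=0$ and $t=1$ gives that $p_k$ and $p_{k+1}$ have equally many zeros in $\mathbf{D}$, hence in $\overline{\mathbf{D}}$ since neither vanishes on $\mathbf{T}$.

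The main obstacle I anticipate is being careful about degenerate cases and about what "equally many zeros in $\overline{\mathbf{D}}$" means when $\deg p_{k+1} < \deg p_k$: a zero can be "lost to infinity" as $t$ increases, but such a lost zero must exit through $\mathbf{T}$, which is precisely what the non-vanishing estimate forbids — so the bookkeeping is consistent. One should also note the edge case $|a_{n'}| = 0$, where $p_{k+1} = \overline{a}_0 p_k$ up to degree and the claim is immediate; and one must make sure zeros are counted with multiplicity throughout (the winding-number formula does this automatically). Finally, to invoke the homotopy argument one only needs $p_k$ to have no zeros on $\mathbf{T}$ — which is a hypothesis — and $|a_0| > |a_{n'}|$ — which is exactly what $p_{k+1}(0) > 0$ supplies via Lemma \ref{lemma: SC}(ii)'s underlying identity $(S_{n'}p_k)(0) = |a_0|^2 - |a_{n'}|^2$; so no further input is required.
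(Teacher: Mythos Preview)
Your argument is correct and is essentially the paper's proof, which simply observes $|\overline{a}_0\,p_k|>|a_{n-k}\,p_k^{\#}|$ on $\mathbf{T}$ (from $|p_k^{\#}|=|p_k|$ there and $|a_0|>|a_{n-k}|$) and then invokes Rouch\'e's theorem directly; your homotopy/winding-number computation is just the standard proof of Rouch\'e spelled out. One minor slip: the Schur transform $S_{n-k}$ and the reciprocal $p_k^{\#}$ are defined relative to the ambient space $\mathcal{P}_{n-k}$, so the relevant top coefficient is $a_{n-k}$ (possibly zero) rather than the actual leading coefficient $a_{n'}$---though when $n'<n-k$ one has $p_{k+1}=\overline{a}_0\,p_k$ and the claim is immediate anyway.
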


\begin{proof}
Let $p_k(z)=b_0+b_1z+\ldots+b_{n-k}z^{n-k}$. Then $p_{k+1}(z)=\overline{b}_0p_k-b_{n-k}p_k^\#$ and $p_{k+1}(0)=|b_0|^2-|b_{n-k}|^2>0$ so $|b_0|>|b_{n-k}|$. Since $p_k(z)\neq 0$ on $\mathbf{T}$ we have $|\overline{b}_0p_k|>|b_{n-k}p_k|=|b_{n-k}p_k^\#|$ on $\T$ and thus Rouché's theorem implies that $p_{k+1}$ and $p_k$ have equally many zeros in $\overline{\mathbf{D}}$.
\end{proof}

We are now ready to formulate Schur-Cohn's test (e.g. \cite{Henrici74}).

\begin{theorem}\label{thm: SchurCohn}
A polynomial $p\in\mathcal{P}_n$ has no zeros in $\overline{\mathbf{D}}$ if and only if $p_k(0)>0$ for $k=1,\ldots,n.$
\end{theorem}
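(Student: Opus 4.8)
\medskip

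The plan is to prove both directions by induction on $n$, using Lemma \ref{SchurLemma} and Lemma \ref{lemma: SC} as the driving tools, together with the two elementary facts already recorded: that $(S_np)(0) = |a_0|^2 - |a_n|^2$ is real, and that every zero of $p$ on $\mathbf{T}$ is also a zero of $p^{\#}$ and hence of $S_np$.

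\medskip

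\textbf{Sufficiency (if $p_k(0)>0$ for all $k=1,\dots,n$, then $p$ has no zeros in $\overline{\mathbf{D}}$).} I would argue by downward reasoning through the chain $p_0,\dots,p_n$. Since $p_n\in\mathcal{P}_0$ is the constant $p_n(0)>0$, it trivially has no zeros in $\overline{\mathbf{D}}$. Now I proceed upward: suppose $p_{k+1}$ has no zeros in $\overline{\mathbf{D}}$; I claim $p_k$ has none either. First observe $p_k$ has no zeros on $\mathbf{T}$: if it did, that zero would be inherited by $p_{k+1}=S_{n-k}p_k$, contradicting the inductive hypothesis. Having established $p_k\neq 0$ on $\mathbf{T}$, and since $p_{k+1}(0)>0$ by assumption, Lemma \ref{SchurLemma} applies and tells us $p_k$ and $p_{k+1}$ have equally many zeros in $\overline{\mathbf{D}}$, namely zero. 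Iterating from $k=n-1$ down to $k=0$ gives that $p=p_0$ has no zeros in $\overline{\mathbf{D}}$.

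\medskip

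\textbf{Necessity (if $p$ has no zeros in $\overline{\mathbf{D}}$, then $p_k(0)>0$ for all $k$).} Here I would show the stronger statement that, under the hypothesis, \emph{every} $p_k$ has no zeros in $\overline{\mathbf{D}}$ and $p_k(0)>0$; I prove this by induction on $k$, starting from $k=0$. For the step, assume $p_k$ has no zeros in $\overline{\mathbf{D}}$. Writing $p_k(z)=b_0+\dots+b_{n-k}z^{n-k}$, part (ii) of Lemma \ref{lemma: SC} gives $|b_0|>|b_{n-k}|$, hence $p_{k+1}(0)=|b_0|^2-|b_{n-k}|^2>0$. Moreover $p_k$ has no zeros on $\mathbf{T}$ (it has none in all of $\overline{\mathbf{D}}$), so Lemma \ref{SchurLemma} applies and shows $p_{k+1}$ has equally many zeros in $\overline{\mathbf{D}}$ as $p_k$, i.e.\ none. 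This closes the induction and yields $p_k(0)>0$ for $k=1,\dots,n$.

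\medskip

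\textbf{Main obstacle.} The delicate point in both directions is making sure the hypotheses of Lemma \ref{SchurLemma} are actually met at each stage — specifically the requirement that $p_k$ have no zeros \emph{on} $\mathbf{T}$, since the lemma is silent otherwise. In the sufficiency direction this is extracted from the positivity $p_{k+1}(0)>0$ via the "zeros on $\mathbf{T}$ are inherited" fact (a zero on $\mathbf{T}$ would force $p_{k+1}$ to vanish at that point, contradicting $p_{k+1}(0)>0$ after tracking it down the chain); in the necessity direction it is automatic because the stronger inductive hypothesis "no zeros in $\overline{\mathbf{D}}$" already excludes $\mathbf{T}$. A secondary subtlety worth a line of care is the degenerate situation where some $p_k$ drops degree (its leading coefficient vanishes) or becomes identically zero; one should check that in the necessity direction this cannot happen under the standing hypothesis, since $p_{k+1}(0)=|b_0|^2-|b_{n-k}|^2>0$ in particular forces $b_0\neq 0$, keeping each $p_k$ a genuine nonzero polynomial with $p_k(0)\neq 0$.
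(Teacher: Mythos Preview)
Your proposal is correct and follows essentially the same route as the paper's proof: both directions proceed by stepping through the Schur chain, invoking Lemma~\ref{lemma: SC}(ii) to obtain $p_{k+1}(0)>0$ in the necessity direction, and in both directions using the ``zeros on $\mathbf{T}$ are inherited by $S_np$'' observation to verify the boundary hypothesis of Lemma~\ref{SchurLemma} before applying it. Your added remarks on degree-drop and the nonvanishing of $p_k(0)$ are a welcome bit of extra care, but the underlying argument matches the paper's.
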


\begin{proof}
Assume that $p=p_0$ has no zeros in $\overline{\mathbf{D}}$. By Lemma \ref{lemma: SC}, $p_1(0)=|a_0|^2-|a_n|^2>0$. Since $p_0$ has no zeros on $\mathbf{T}$, Lemma
\ref{SchurLemma} implies that $p_1$ and $p_0$ has the same number of zeros in $\overline{\mathbf{D}}$, i.e., none. If $n\ge 2$ we can repeat the reasoning with $p_1\in\mathcal{P}_{n-1}$ instead of $p_0$ and deduce $p_2(0)>0$ and so on, and after a finite number of steps we finally get $p_n(0)>0$.
	
	To prove the reverse implication, assume that $p_k(0)>0$ for $k=1,\ldots,n$. Then $p_n$ is a nonzero constant, and especially has no zeros on $\mathbf{T}$. Since $p_n=S_1p_{n-1}$, $p_{n-1}$ has no zeros on $\mathbf{T}$ and by Lemma \ref{SchurLemma}, $p_{n-1}$ and $p_n$ have the same number of zeros in $|z|\le 1$, i.e., none. We may now repeat this with $p_{n-1}\in\mathcal{P}_1$ instead of $p_n$ and deduce that $p_{n-2}$ has no zeros in $\overline{\mathbf{D}}$ and so on, and after a finite number of steps we finally get that $p=p_0$ has no zeros in $\overline{\mathbf{D}}$.
\end{proof}

\section{Real Comprehensive Triangular Decomposition} \label{sec:background}
The method of real triangular decompositions, introduced recently in \cite[\S4]{chen2011semi}, \cite[\S10]{chenThesis} and \cite{chen2007comprehensive},
provides a suitable framework to deal with the
the mapping problem for quadrature domains, in the form of systems such as \eqref{meq2}, \eqref{con2}. These methods in turn make use of the idea of a Cylindrical Algebraic Decomposition \cite[\S5]{basu2006algorithms}, for an overview of this and other applicable methods from real algebraic geometry see, for example, the book \cite{basu2006algorithms}.

The objects we consider here are {\em semi-algebraic sets}, given a list of polynomial equations and inequalities in $\RR[x_1,\dots, x_m]$ {\em a basic semi-algebraic set} is the set of all points in $\RR^m$ which simultaneously satisfy all these equations and inequalities, \cite[\S3]{basu2006algorithms}. In this section we will specifically consider semi-algebraic systems defined by polynomials in the polynomial ring $$\QQ[c,x]=\QQ[c_1,\dots, c_d][x_1,\dots, x_n],$$
where we think of the $c_i$ as parameters and the $x_j$ as variables.

More precisely, given polynomials $f_1,\dots, f_r$ and $p_1,\dots, p_s$ in $\QQ[c,x]$, we define the {\em semi-algebraic system} $[f_{=0},p_{>0}]$ to be the following set of equations and inequalities
\begin{equation}\label{SAS}f_1(c,x)=0,\quad \cdots\quad f_r(c,x)=0,\quad p_1(c,x)>0,\quad \cdots\quad p_s(c,x)>0.\end{equation}
The set of real solutions $(c,x)\in \R^d\times\R^n$ to \eqref{SAS} is called the
{\em (parameterized) semi-algebraic set} generated by the system, denoted by
$\mathcal{S}([f_{=0},p_{>0}])$.
Moreover, for fixed $c\in \RR^d$ we
define the {\em specialized semi-algebraic set} $
\mathcal{S}_{(c)}([f_{=0},p_{>0}])$
as the set of points $x\in\RR^n$ which satisfy the system \eqref{SAS} for the particular parameter-value $c$.

Now suppose that $\mathfrak{T}=\left\lbrace \mathcal{T}_1,\dots, \mathcal{T}_\ell \right\rbrace$ is a collection of semi-algebraic systems in $\QQ[c,x]$. We extend the definitions of semi-algebraic set and specialization to a parameter value $c$ by $$\mathcal{S}(\mathfrak{T})=\bigcup_{j=1}^\ell\mathcal{S}(\mathcal{T}_j)\subset \RR^d\times \RR^n, \qquad \mathcal{S}_{(c)}(\mathfrak{T})=\bigcup_{j=1}^\ell\mathcal{S}_{(c)}(\mathcal{T}_j)\subset  \RR^n.$$

Moreover, given a semi-algebraic system $\mathcal{T}=[f_{=0},p_{>0}]$ we define the \textit{constructible set} of $\mathcal{T}$ to be the set of \textit{complex} solutions
$(c,x)\in\C^d\times \C^n$ to the system of equations and inequalities
\begin{equation}\label{CON}f_1(c,x)=\cdots=f_r(c,x)=0,\quad p_1(c,x)\neq 0,\quad \cdots,\quad p_s(c,x)\neq 0.\end{equation}
We denote by $\CC\mathcal{S}(\mathcal{T})$ the set of solutions $(c,x)\in \C^d\times\C^n$ to \eqref{CON}; given
$c\in \CC^d$ we define the associated specialization $\CC\mathcal{S}_{(c)}(\mathcal{T})$ to be the set of points $x\in\CC^n$ such that $(c,x)\in \CC\mathcal{S}(\mathcal{T})$.

If $\mathfrak{T}=\left\lbrace \mathcal{T}_1,\dots, T_\ell \right\rbrace$ is a finite collection of semi-algebraic systems, it should now be obvious how to extend our definitions of constructible set ($\CC\mathcal{S}(\mathfrak{T})=\cup_{j} \CC\mathcal{S}(\mathcal{T}_j)$) and specializations
($\CC\mathcal{S}_{(c)}(\mathfrak{T})=\cup_{j}\CC\mathcal{S}_{(c)}(\mathcal{T}_j)\subset \CC^n$).

A semi-algebraic system $ \mathcal{T}=[f_{=0},p_{>0}]$ is called {\em square-free} if all polynomials $f_j$ and $p_i$ occurring in $\mathcal{T}$ are square-free.
(A polynomial $g\in \QQ[c,x]$ is square-free if it has no factor of the form
$w^2$ where $w\in \QQ[c,x]$ is non-constant.)

\begin{definition} Let $\mathcal{W}=[f_{=0},p_{>0}]$ be a semi-algebraic system defined by polynomials $f_1,\dots, f_r$ and $p_1,\dots, p_s$ in $\QQ[c,x]$ and let $\mathcal{S}(\mathcal{W})\subset \RR^d\times \RR^n$ be the associated semi-algebraic set.

A {\em real comprehensive triangular decomposition (RCTD)}
of $\calW$ is a pair $(\mathfrak{C},\left\lbrace \mathfrak{T}_C \;;\; C\in \mathfrak{C}\right\rbrace )$ where
$\frakC$ is a finite partition
of $\R^d$ into non-empty semi-algebraic sets $C$ (called ``cells'') and for each $C\in \mathfrak{C}$, $\mathfrak{T}_C$ is a finite set of square-free semi-algebraic systems such that exactly one of the following holds:
        \begin{enumerate}
        \item $\mathfrak{T}_C$ is empty so $\mathcal{S}(\mathfrak{T}_C)=\RR^d\times \RR^n$ and $\mathcal{S}_{(c)}(\mathfrak{T}_C)=\RR^n$ for all $c\in \RR^d$,
        \item
        The specialized constructible set $\CC\mathcal{S}_{(c)}(\mathfrak{T}_C)$ is infinite for all $c\in C$,
        \item $\mathfrak{T}_C=\left\lbrace \mathcal{T}_1,\dots, \mathcal{T}_\ell\right\rbrace$ is a finite set of semi-algebraic systems satisfying the following conditions:
        \begin{itemize}
            \item $\CC\mathcal{S}_{(c)}(\mathfrak{T}_C)$ is finite and has fixed cardinality for all $c\in C$,
            \item the specialized semi-algebraic sets $\mathcal{S}_{(c)}(\mathcal{T}_j)$ are finite and non-empty for all $j$ and further for a fixed $\mathcal{T}_j$ the specialized semi-algebraic set $\mathcal{S}_{(c)}(\mathcal{T}_j)$ has fixed cardinality for all $c\in C$,
            \item $\mathcal{S}_{(c)}(\mathcal{W})=\bigsqcup_{j=1}^\ell\mathcal{S}_{(c)}(\mathcal{T}_j) $ for all $c\in C$.
        \end{itemize}
    \end{enumerate}\label{def:RCTD}
\end{definition}

The following proposition summarizes the results about RCTD's that we will apply in the sequel.

\begin{proposition}[\S10 of \cite{chenThesis}]
Let $f_1,\dots, f_r$ and $p_1,\dots, p_s$  be polynomials in the polynomial ring $\QQ[c,x]$%=\QQ[c_1,\dots, c_d,x_1,\dots, x_n]$
defining a semi-algebraic system $\mathcal{W}=[f_{=0},p_{>0}]$. Then a real comprehensive triangular decomposition %$(\mathfrak{C},\left\lbrace \mathfrak{T}_C \;;\; C\in \mathfrak{C}\right\rbrace )$
of $\mathcal{W}$
exists and may be computed by an explicit algorithm which is guaranteed to terminate in finite time. This algorithm is implemented in the \texttt{RegularChains} Maple package \cite{RegularChains}.
\end{proposition}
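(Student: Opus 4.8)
This result is established in detail in \cite[\S10]{chenThesis}; here I only sketch the strategy I would follow. The plan is to build the decomposition in two stages: first produce a \emph{comprehensive triangular decomposition} over $\CC$ of the equational part of $\mathcal{W}$, and then carry out a \emph{real refinement} that restricts attention to $\RR^d\times\RR^n$ and disposes of the strict inequalities $p_i>0$.

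For the first stage I would invoke the theory of regular chains. The \texttt{Triangularize} algorithm decomposes the complex variety $V(f_1,\dots,f_r)\subset\CC^{d+n}$ into finitely many quasi-components of regular chains, each obtained through an iterated sequence of resultant and subresultant (regular gcd) computations; this terminates because every elimination step strictly decreases the number of remaining variables, with a univariate base case. To make the output comprehensive in the parameters $c$, I would track the \emph{border polynomial} $B\in\QQ[c]$ — essentially the product of the initials, discriminants and resultants encountered along the way — so that for every $c$ outside the hypersurface $\{B=0\}$ the regular chains specialize correctly and the cardinality of $\CC\mathcal{S}_{(c)}$ (counted without multiplicity, after passing to square-free parts) is locally constant. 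Positive-dimensional components would be separated out as they appear; these produce alternative (2) of Definition \ref{def:RCTD}.

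The second stage is where the real structure enters, and it is the part I expect to be delicate. One must produce a \emph{single} finite semi-algebraic partition $\frakC$ of $\RR^d$ over each cell $C$ of which, \emph{simultaneously}: (i) $\CC\mathcal{S}_{(c)}$ has constant cardinality; (ii) the number of real points on each branch of each regular chain is constant; and (iii) the sign of every $p_i$ along each such real branch is constant. Requirement (i) is already controlled by $B$; requirements (ii) and (iii) constitute a real root classification, which I would obtain from a Cylindrical Algebraic Decomposition of $\RR^d$ adapted to $B$, to the $x$-discriminants of the triangular systems, and to the resultants of the $p_i$ with those systems (so that a sign change of a $p_i$ along a branch is forced to occur at a cell wall). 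Over each cell $C$ one then retains only the branches that are real and on which all $p_i$ are strictly positive. Square-freeness of the output systems, as demanded in Definition \ref{def:RCTD}, is arranged by replacing the univariate polynomials generated at each level by their square-free parts — legitimate because the positivity constraints are now encoded in the choice of cell and branch rather than carried as polynomials in the system — while disjointness of the pieces $\mathcal{S}_{(c)}(\mathcal{T}_j)$ is inherited from the disjointness of quasi-components of distinct regular chains in a triangular decomposition. This yields, for each $C\in\frakC$, exactly one of the three alternatives of Definition \ref{def:RCTD}.

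The main obstacle is precisely this coordination: guaranteeing a \emph{common} finite semi-algebraic partition of $\RR^d$ that stabilizes the complex solution count, the real solution count on each branch, and the signs of the constraint polynomials on those branches all at once, with the partition consisting of genuine semi-algebraic cells. This is exactly what Cylindrical Algebraic Decomposition \cite{basu2006algorithms} is designed to deliver; its termination (Collins' algorithm), combined with the finiteness of the resultant/projection tower built in the first stage, gives termination of the whole procedure. The careful organization of these steps, and the verification that the output meets every bullet condition of Definition \ref{def:RCTD}, is the content of \cite[\S10]{chenThesis}, and is realized by the real-comprehensive-triangularization routines of the \texttt{RegularChains} package \cite{RegularChains}.
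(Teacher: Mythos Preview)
Your sketch is a reasonable outline of the construction in \cite{chenThesis}, but you should be aware that the paper itself does not prove this proposition at all: it is stated as a citation of Chen's thesis and of the \texttt{RegularChains} implementation, with no accompanying argument. So there is nothing to compare your proposal against in the paper; the authors treat the existence and computability of an RCTD as a black box imported from the symbolic-computation literature.

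That said, if you intend to include a sketch like this, a couple of points deserve tightening. First, the claim that ``disjointness of the pieces $\mathcal{S}_{(c)}(\mathcal{T}_j)$ is inherited from the disjointness of quasi-components of distinct regular chains'' is not automatic: quasi-components of a triangular decomposition can overlap in general, and one needs the stronger notion of a \emph{Lazard--Wu} or \emph{disjoint} decomposition (or an explicit splitting step) to guarantee the disjoint union required in Definition~\ref{def:RCTD}. Second, your handling of square-freeness is slightly off: replacing polynomials by their square-free parts must be done coherently with the specialization to each cell $C$, since a polynomial square-free over $\QQ(c)$ may acquire repeated factors after specializing $c$; this is exactly why the border polynomial must include the relevant discriminants. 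These are the sort of bookkeeping issues that \cite{chenThesis} handles carefully and that a sketch can reasonably defer, but you should flag them rather than gloss over them.
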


\begin{example}
\label{ex:RCTD}

Define $f=a\cdot y^2+b\cdot x +3$ and $g=b\cdot xy-y+2$ where $x,y$ denote real variables and $a,b$ denote real parameters.
Consider the semi-algebraic system $$
\mathcal{W}=[f=0,g=0,y\leq 1].
$$

Using the \texttt{RegularChains} package, the parameter space
$\RR^2$ is partitioned into five cells $\mathfrak{C}=\{C_0,C_1,C_2,C_3, C_\infty\}$ where \begin{align*}
&C_\infty=\left\lbrace \left( -\frac{3}{4},0\right) \right\rbrace,\;\;\;C_0=\left\lbrace (a,b)\in \RR^2 \;;\; a\neq -\frac{3}{4},\;\; b=0 \right\rbrace, \\
&C_1=\left\lbrace(a,b)\in \RR^2 \;;\; a<-\frac{64}{27}, \;b\neq 0 \right\rbrace \bigsqcup \left\lbrace(a,b)\in \RR^2 \;|\; a\geq 0,\; b\neq 0 \right\rbrace, \\
&C_2=\left\lbrace(a,b)\in \RR^2 \;;\; a=-\frac{64}{27}, \;b\neq 0\right\rbrace\bigsqcup \left\lbrace(a,b)\in \RR^2 \;|\; -2<a<0, \; b\neq 0 \right\rbrace,\\
&C_3=\left\lbrace(a,b)\in \RR^2 \;;\; -\frac{64}{27}<a\leq -2, \; b\neq 0 \right\rbrace.
\end{align*} These cells are illustrated in Figure \ref{fig:RCTD_Example}.
\begin{figure}[h]
\includegraphics[scale=0.3]{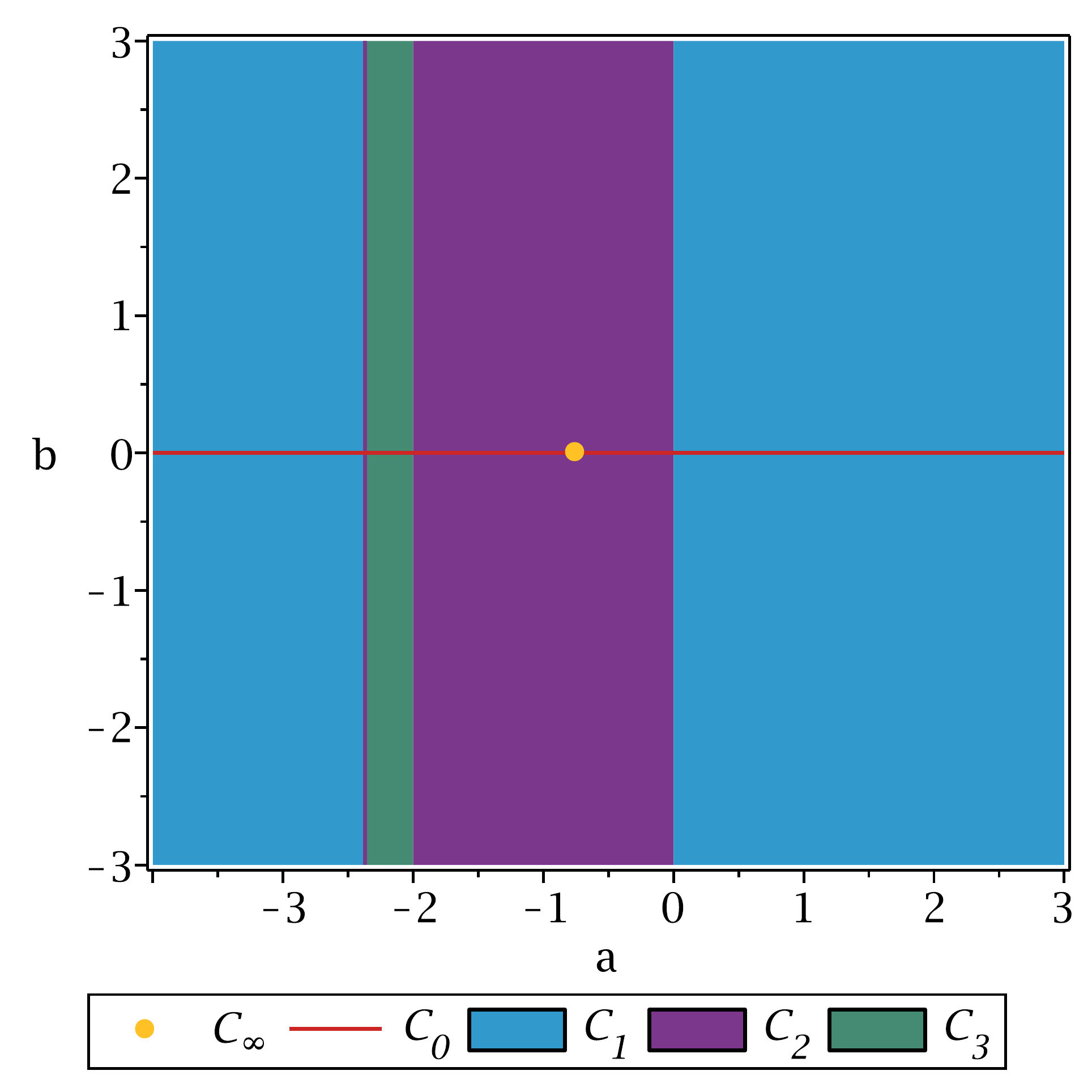}
\caption{The cells $\mathfrak{C}$ from the RCTD in Example \ref{ex:RCTD}. \label{fig:RCTD_Example}}

\end{figure}

It is not hard to show that
the specialized semi-algebraic set $\mathcal{S}_{(a,b)}(\mathcal{W})$ consists of $j$ points for all $(a,b)\in C_j$ where $j\in \{0,1,2,3\}$; for the cell $C_\infty$ the RCTD guarantees only that the specialized constructible set associated to the parameter choice $C_\infty$ has infinitely many points (a corresponding specialized semi-algebraic set may be infinite, empty, or finite). Hence $\mathfrak{C}$, along with the associated semi-algebraic systems for each parameter cell, gives a RCTD of $\mathcal{W}$.

In more detail; consider the cells $C_0$ and $C_\infty$. The disjoint union of these cells is the line $b=0$ (i.e.~the a-axis in Figure \ref{fig:RCTD_Example}). Along this line the semi-algebraic system $\mathcal{W}$ simplifies to
\begin{equation}
a\cdot y^2+3=2-y=0, \;\; y\leq 1.\label{eq:simpSysEx}
\end{equation}
 Solving for $y$ we obtain $y=\sqrt{\frac{-3}{a}}$ and $y=2$, so $f=g=0$ has a real solution if and only if $a=\frac{-3}{4}$, %i.e.,
 so $\{(a,b)\}=C_\infty$. The corresponding specialized constructible set is $$\left\lbrace (x,y)\in \CC^2\;;\;(y-2)(y+2)=(y+2)=0, \; \; y\neq 1\right\rbrace=\{(x,2)\;;\; x\in \CC\},$$
 which is infinite. However since $y\leq 1$ is never satisfied the semi-algebraic set $\calS(\calI_{C_\infty})$ is empty.
On the other hand in the cell $C_0$ it is clear that the semi-algebraic system \eqref{eq:simpSysEx} has no real solutions, since $a\ne\frac{-3}{4}$ when $(a,b)\in C_0$.

Now consider the subset $P$ of the cell $C_2$ consisting of all $(a,b)$ where $a=-\frac{64}{27}$ and $b\neq 0$. For $(a,b)\in P$ the system $\mathcal{W}$ simplifies to
\begin{equation}
\left[x=\frac{7}{3b}, \;\; y=\frac{-3}{2},\;\; y\leq 1\right]\;\; {\rm or}\;\;\left[x=\frac{-5}{3b}, \;\; y=\frac{3}{4},\;\; y\leq 1\right]. \label{eq:simpTwoSol}
\end{equation}
Clearly the system \eqref{eq:simpTwoSol} has precisely two solutions for any $b\neq 0$, namely $(x,y)=\left(\frac{7}{3b}, \frac{-3}{2} \right)$ and  $(x,y)=\left(\frac{-5}{3b}, \frac{3}{4}\right)$. Similarly all other choices of $(a,b)\in C_2$ yield associated specialized semi-algebraic sets with exactly two points.
\end{example}

\section{Some Computational Results}
In this section we apply the method of real comprehensive triangular decompositions to obtain (new) proofs of uniqueness for certain families of quadrature domains.

\subsection{Aharonov-Shapiro 1976}\label{section:AS}
In this subsection, we shall give an alternative proof of a theorem of Aharonov and Shapiro from \cite{Aharonov1976}, which states that a solid quadrature domain obeying a quadrature identity of the form
\begin{equation}\label{eq: AS 1976}
\int_\Omega f\ dA=M_0f(0)+M_1f'(0),\qquad f\in AL^1(\Omega)
\end{equation}
is unique.
Here $M_0$ is the area of $\Omega$, so necessarily $M_0>0$. The constant $M_1$ is allowed to be an arbitrary complex number.

Recall that the computations in Example \ref{Example: monopole and dipole} show that a normalized mapping function $\fii:\D\to\Omega$ is necessarily a polynomial of degree at most $2$, which solves the system
\begin{equation}\label{casas}
\begin{cases}
w_1^3\bar{w}_1=M_0w_1^2-M_1w_2\\
w_1^2\bar{w}_2=2M_1
\end{cases},\qquad (w_1:=\fii'(0)>0,\quad w_2:=\fii''(0)).
\end{equation}
It remains to show that this system gives rise to a unique univalent solution.
For this, we write
$$M_1:=m_1+in_1,\qquad w_2:=u_2+iv_2.$$

We now obtain the following semi-algebraic system, which is equivalent to \eqref{casas},
\begin{equation*}
(\mathcal{W})\qquad
\begin{cases}
w_1^4=M_0w_1^2-(m_1u_2-n_1v_2)\\
0=m_1v_2+n_1u_2\\
w_1^2u_2=2m_1\\
-w_1^2v_2=2n_1\\
M_0>0\\
w_1>0\\
(w_1^2M_0-(m_1u_2-n_1v_2))^2-4w_1^2(m_1^2+n_1^2)\ge 0
\end{cases}.\label{eq:AS_Sys}
\end{equation*}

We must verify that the system $\mathcal{W}$ gives rise to at most one univalent solution for all relevant choices of quadrature data.
For this, we first recognize that the last condition in $\mathcal{W}$ is just the Schur-Cohn constraint (see Theorem \ref{thm: SchurCohn}), which ensures that $\fii'(z)\neq 0$ in $\mathbf{D}$. In general this is only necessary for univalence but for polynomials of degree two it is also sufficient.
We shall treat $c=(m_1,n_1,M_0)$
 as parameters.
Computing a real comprehensive triangular decomposition
 of the system $\mathcal{W}$, using the \texttt{RegularChains} library,
 we obtain a partition of the parameter space $\RR^3$ into two cells $C_0,C_1$ having the following properties.

All points in the cell $C_0$
 are such that $M_0\leq 0$; hence no point of $C_0$ can correspond to a quadrature domain.
 With $\omega:=\sqrt[3]{(27m_1^2+27n_1^2)/2}$,
 the cell $C_1$
is expressed as the disjoint union of the following six subsets of $\R^3$,
\begin{enumerate}[label=(\roman*)]
	\item \label{i1} \begin{equation*}
	\begin{cases}
	M_0=\omega\\
	m_1=m_1\\
	n_1<0
	\end{cases},\ \
	\begin{cases}
	M_0=\omega\\
	m_1=m_1\\
	n_1>0
	\end{cases}\label{eq:firstRegionC1}
	\end{equation*}
	\item \label{i2} \begin{equation*}
	\begin{cases}
	M_0=\omega\\
	m_1<0\\
	n_1=0
	\end{cases}
	\end{equation*}
	\item \label{i3} \begin{equation*}
	\begin{cases}
	M_0=\omega\\
	m_1>0\\
	n_1=0
	\end{cases}
	\end{equation*}
	\item \label{i4} \begin{equation*}
	\begin{cases}
	M_0>\omega\\
	m_1=m_1\\
	n_1<0
	\end{cases},\ \
	\begin{cases}
	M_0>\omega\\
	m_1=m_1\\
	n_1>0
	\end{cases}
	\end{equation*}
	\item \label{i5} \begin{equation*}
	\begin{cases}
	M_0>\omega\\
	m_1<0\\
	n_1=0
	\end{cases},\ \
	\begin{cases}
	M_0>\omega\\
	m_1>0\\
	n_1=0
	\end{cases}
	\end{equation*}
	\item \label{i6} \begin{equation*}
	\begin{cases}
	M_0>0\\m_1=0\\n_1=0
	\end{cases}.\label{eq:lastRegionC1}
	\end{equation*}
\end{enumerate}

For $(M_0,m_1,n_1)$ in each of these sets, we now prove that the system \eqref{casas} has a unique solution $(w_1,w_2)=(w_1,u_2+iv_2)$ with $w_1>0$.
Indeed, straightforward calculations show that in each of the parameter-domains,
\ref{i1}--\ref{i6}
the semi-algebraic system $\mathcal{W}$ simplifies to, respectively,
\begin{enumerate}[label=(\roman*)]
	\item \begin{equation*}
	\begin{cases}
	v_2w_1^2+2n_1=0\\
	n_1u_2+v_2m_1=0\\
	(9m_1^2+9n_1^2)v_2+2M_0^2n_1=0\\
	w_1>0
	\end{cases}\label{eq:firstSimp}
	\end{equation*}
	\item \begin{equation*}
	\begin{cases}
	2u_2w_1-u_2^2+2M_0=0\\
	9m_1u_2-2M_0^2=0\\
	v_2=0
	\end{cases}
	\end{equation*}
	\item \begin{equation*}
	\begin{cases}
	2u_2w_1+u_2^2-2M_0=0\\
	9m_1u_2-2M_0^2=0\\
	v_2=0
	\end{cases}
	\end{equation*}
	\item \begin{equation*}
	\begin{cases}
	v_2w_1^2+2n_1=0\\
	n_1u_2+v_2m_1=0\\
	(m_1^2+n_1^2)v_3^2-2M_0n_1^2v_2-4n_1^3=0\\
	w_1>0\\
	-v_{{2}}^{2}M_{{0}}m_{{1}}^{2}n_{{1}}^{2}-v_{{2}}^{2}M_{{0}}
	n_{{1}}^{4}+2\,M_{{0}}^{2}n_1^{4}+6\,v_{{2}}m_{{1}}^{2}n_
		{{1}}^{3}+6\,v_{{2}}n_{{1}}^{5}>0
	\end{cases}
	\end{equation*}
	\item \begin{equation*}
	\begin{cases}
	u_2w_1^2-2m_1=0\\
	u_2^3-2M_0u_2+4m_1=0\\
	v_2=0\\
	w_1>0\\
	-M_{{0}}m_{{1}}^{2}u_{{2}}^{2}+2\,M_{{0}}^{2}m_{{1}}^{2}-6\,
	m_{{1}}^{3}u_{{2}}-4\,m_{{1}}n_{{1}}^{2}u_{{2}}>0
	\end{cases}
	\end{equation*}
	\item \begin{equation*}
	\begin{cases}
	w_1^2-M_0=0\\
	u_2=0\\
	v_2=0\\
	w_1>0
	\end{cases}
	\end{equation*}
\end{enumerate}

In each case \ref{i1}--\ref{i6}, we have a unique solution $(w_1,w_2)=(w_1,u_2+iv_2)$ where $w_1>0$, which concludes our automated proof of uniqueness for
solid q.d.'s obeying \eqref{eq: AS 1976}. $\qed$

\begin{remark} The uniqueness problem for quadrature domains of order $2$ has been completely settled, see \cite[Corollary 10.1]{G83}.
More precisely, counting also non simply connected domains and combining with results in \cite{Gu88}, a given point functional $\mu$ of order $2$ can give rise
to at most two quadrature domains: one simply connected $\Omega$, and possibly another one of the form $\Omega\setminus\{a\}$ where $a$ is a ``special point'',
i.e., $S(a)=\bar{a}$ where $S$ is the Schwarz function. The term ``special point'' is due to Shapiro in \cite{Sh87}, see Theorem 2.9 and the discussion that precedes it.

These methods can also be applied to polynomials of higher degree. (This topic is being considered as part of an ongoing work by the two of the authors.)
\end{remark}

\subsection{Symmetric smash sums}
\label{smashs}
Consider a domain $\Omega$ that satisfies
\begin{equation}\label{smash}\int_\Omega f\, dA=c(f(a_0)+\cdots+f(a_{n-1})),\qquad a_j=a e^{2\pi i j/n},\qquad f\in AL^1(\Omega),\end{equation}
where $c>0$ and $a<0$ are constants and $n$ is a positive integer.

A domain satisfying \eqref{smash} may be constructed as a potential theoretic sum (or ``smash sum'') of discs $D(a_j;\sqrt{c})$, where excess mass coming from overlapping discs
is swept out using the process of partial balayage. When $n=2$ this process gives rise to the well-known Neumann's oval.

 For $n\ge 3$ one can surmise that a (connected) domain satisfying \eqref{smash} should be either simply connected or doubly connected, depending on whether or not $0\in \Omega$. The doubly connected case has already been treated in \cite[Section 5.8]{Varchenko1992} and in \cite{Crowdy02}. (See Subsection \ref{mcq} for further remarks in this connection.)

In this subsection, we shall focus on the simply connected case and show how uniqueness of the quadrature domain follows using our general algebraic scheme.
(In this particular case, there are, of course, alternative ways to see this, e.g.~by Novikov's theorem, Theorem \ref{settles}, \ref{no38}.)

 Due to the $\mathbb{Z}_n$ symmetry of \eqref{smash}, the conformal map $\phi:\mathbf{D}\to\Omega$ attains the simple form
\begin{equation}\label{eq: Z_n mapping}
\phi(z)=\sum_{j=0}^{n-1}\frac{c_j}{\bar{w}_j}\frac{z}{1-\bar{\lambda}_jz}=\frac{c}{w}\sum_{j=0}^{n-1}\frac{z}{1-\exp\left(\frac{-2\pi i}{n}j\right)\lambda z}=\frac{cn}{w}\frac{z}{1-\lambda^nz^n}
\end{equation}
where $w_j=\phi'(\lambda_j)=w>0$, $-1<\lambda=\phi^{-1}(a)<0$.
This gives
\begin{equation}\label{eq: Z_n phi_prime}
\phi'(z)=\frac{cn}{w}\frac{1+(n-1)\lambda^nz^n}{(1-\lambda^nz^n)^2}.
\end{equation}
The two unknowns, $\lambda$ and $w$ ($-1<\lambda<0$ and $w>0$) satisfy
\begin{equation*}\label{eq: Z_n alg system}
\begin{cases}\displaystyle
a=\frac{cn}{w}\frac{\lambda}{1-\lambda^{2n}}\\
\displaystyle
w=\frac{cn}{w}\frac{1+(n-1)\lambda^{2n}}{(1-\lambda^{2n})^2}
\end{cases}
\qquad\iff\qquad
\begin{cases}
aw(1-\lambda^{2n})=nc\lambda\\
w^2(\lambda^{4n}-2\lambda^{2n}+1)=nc((n-1)\lambda^{2n}+1).
\end{cases}
\end{equation*}

We only need to check solutions $\fii$ which are locally univalent.
From \eqref{eq: Z_n phi_prime} (or from Schur-Cohn's test) one may easily deduce that local univalence holds if $1\ge (n-1)\lambda^{n}$.
Taking this into consideration we obtain the following
semi-algebraic system
\begin{align}
\label{SAS7}
\begin{cases}
aw(1-\lambda^{2n})=nc\lambda\\
w^2(\lambda^{4n}-2\lambda^{2n}+1)=nc((n-1)\lambda^{2n}+1)\\
c>0\\
a<0\\
w>0\\
-1<\lambda<0\\
1\ge(n-1)^2\lambda^{2n}.
\end{cases}
\end{align}

 When setting up a real comprehensive triangular decomposition, we cannot treat $n$ as a parameter but have to use a fixed value of $n$ (since if $n$ is treated as a parameter the system \eqref{SAS7} is no longer semi-algebraic). Below we show the results for $n=3$ but the corresponding decompositions may be obtained for up to $n=10$ without any runtime issues.

Thus let $\mathcal{W}$ be the semi-algebraic system \eqref{SAS7} with $n=3$ fixed. The associated semi-algebraic set $\mathcal{S}(\mathcal{W})$ is contained in $\RR^2\times \RR^2$. We treat $(a,c)\in \RR^2$ as parameters and
consider the specialized semi-algebraic set $\mathcal{S}_{(a,c)}(\mathcal{W})\subset \RR^2$.
Using an RCTD, we have found that the parameter space $\RR^2$  is divided into two cells $C_0,C_1$
such that for all $(a,c)\in C_1$ the system $\mathcal{W}$ has precisely one real solution and for all $(a,c)\in C_0$ the system $\mathcal{W}$ has no real solutions.
The cell $C_1$ is made up of the three regions defined in \ref{I1}, \ref{I1}, and \ref{I3} below.
\begin{enumerate}[label=(\roman*)]
	\item \label{I1} \begin{equation*}
				\begin{cases}\displaystyle
				c=\frac{a^2}{2^{1/3}}\\
				a<0
				\end{cases}\label{eq:Zn1}
				\end{equation*}
	\item \label{I2} \begin{equation*}
	\begin{cases}\displaystyle
	\frac{a^2}{2^{1/3}}<c<a^2\\
	a<0
	\end{cases}, \ \ \ \begin{cases}
	a^2<c\\
	a<0
	\end{cases}\label{eq:Zn2}
	\end{equation*}
	\item \label{I3} \begin{equation*}
	\begin{cases}
	c=a^2\\
	a<0
	\end{cases}.\label{eq:Zn3}
	\end{equation*}
\end{enumerate}
In each of the domains above, the semi-algebraic system \eqref{SAS7} simplifies as follows
\begin{enumerate}[label=(\roman*)]
	\item \begin{equation*}
	\begin{cases}
	(\lambda^{6}-1)aw+3\lambda c=0\\
	a^2\lambda+c=0
	\end{cases}\label{eq:Zn1Sys}
	\end{equation*}
	\item \begin{equation*}
	\begin{cases}
	(\lambda^{6}-1)aw+3\lambda c=0\\
	2a^2\lambda^6-3c\lambda^2+a^2=0\\
	w>0\\
	-4\lambda^6+1>0\\
	-\lambda>0\\
	\lambda+1>0
	\end{cases}
	\end{equation*}
	\item  \begin{equation*}
	\begin{cases}
	(\lambda^2-1)w+2\lambda a=0\\
	2\lambda^4+2\lambda^2-1=0\\
	w>0\\
	-4\lambda^6+1>0\\
	-\lambda>0\\
	\lambda+1>0
	\end{cases}
	\end{equation*}
\end{enumerate}

Since for all choices of the parameters in the cell $C_1$ we know from the definition of the RCTD that the number of real solutions is fixed, picking a particular $a$ and $c$ in $C_1$ it is easy to verify that the systems above have unique real solutions which correspond to a quadrature domain. Similarly, one can pick a choice of parameters in $C_0$ to verify that the system \eqref{SAS7} has no real solutions for any parameter choice in $C_0$. In particular, we have shown that a quadrature domain obeying a quadrature identity \eqref{smash} is uniquely determined.
 However, we cannot yet be sure that each solution to the system \eqref{SAS7}
 defines a univalent map since we have only required local univalence. The following proposition shows that this is in fact the case.

\begin{proposition}
The mapping $\phi$ in \eqref{eq: Z_n mapping}, ($-1<\lambda<0$), is univalent if and only if
	\begin{equation}\label{66}
	1\ge (n-1)^2\lambda^{2n}.
	\end{equation}
\end{proposition}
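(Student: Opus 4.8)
The plan is to analyze the univalence of the explicit rational map $\phi$ in \eqref{eq: Z_n mapping} directly, exploiting its $\mathbb{Z}_n$-equivariance to reduce the problem to a one-variable question. Write $\phi(z) = \frac{cn}{w}\cdot\frac{z}{1-\lambda^n z^n}$ and set $\beta := \lambda^n$, so $-1 < \beta < 0$ (since $-1<\lambda<0$ and we may as well suppose $n\ge 2$; the case $n=1$ is trivial). Up to the positive scalar $cn/w$, univalence of $\phi$ is equivalent to univalence of $\psi(z) := z/(1-\beta z^n)$ on $\mathbf{D}$, so it suffices to show $\psi$ is univalent on $\mathbf{D}$ if and only if $1 \ge (n-1)^2\beta^2$, i.e. iff $|\beta| \le \frac{1}{n-1}$.

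First I would prove the \emph{necessity} of \eqref{66}: if $|\beta| > \frac{1}{n-1}$ then from \eqref{eq: Z_n phi_prime} one sees $\psi'(z) = \frac{1+(n-1)\beta z^n}{(1-\beta z^n)^2}$ vanishes at some point of $\mathbf{D}$ (solve $z^n = -1/((n-1)\beta)$, which has modulus $< 1$), so $\phi$ is not even locally univalent, hence not univalent. This is the direction already used to motivate the constraint in \eqref{SAS7}. For \emph{sufficiency}, suppose $|\beta| \le \frac{1}{n-1}$ and assume $\psi(z_1) = \psi(z_2)$ with $z_1, z_2 \in \mathbf{D}$, $z_1 \ne z_2$. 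The equation $z_1(1-\beta z_2^n) = z_2(1-\beta z_1^n)$ rearranges to $z_1 - z_2 = \beta(z_1 z_2^n - z_2 z_1^n) = \beta z_1 z_2 (z_2^{n-1} - z_1^{n-1})$, hence, dividing by $z_1 - z_2$ (after the trivial case $z_1z_2 = 0$ is dispatched, which forces $z_1 = z_2$),
\begin{equation*}
1 = \beta z_1 z_2 \cdot \frac{z_2^{n-1} - z_1^{n-1}}{z_2 - z_1} = \beta z_1 z_2 \sum_{k=0}^{n-2} z_1^k z_2^{n-2-k}.
\end{equation*}
Taking absolute values and using $|z_1|, |z_2| < 1$ gives $1 = |\beta|\,|z_1 z_2|\,\bigl|\sum_{k=0}^{n-2} z_1^k z_2^{n-2-k}\bigr| < |\beta|\cdot 1 \cdot (n-1) \le 1$, a contradiction. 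Thus $\psi$, and hence $\phi$, is injective on $\mathbf{D}$.

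The one subtlety to handle carefully is the boundary/closure bookkeeping: the chain of strict inequalities above uses $|z_1|<1$ \emph{and} $|z_2|<1$, which is exactly the case $z_1,z_2\in\mathbf{D}$, so no issue arises for injectivity on the open disc; if one also wants injectivity on $\overline{\mathbf{D}}$ (relevant because $\Omega$ is solid and $\phi$ extends across $\mathbf{T}$ by Lemma \ref{lemma: holomorphic continuation}), one notes that with $|\beta| < \frac{1}{n-1}$ strictly the same estimate still gives a contradiction on $\overline{\mathbf{D}}$, while in the boundary case $|\beta| = \frac{1}{n-1}$ equality could only occur if $|z_1|=|z_2|=1$ and all the terms $z_1^kz_2^{n-2-k}$ align in phase with $z_1z_2$, which pins down $z_1,z_2$ to the finitely many points where $\phi'$ vanishes on $\mathbf{T}$ — precisely the cusps of $\partial\Omega$ — and there injectivity is immediate since distinct cusps map to distinct boundary points. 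The main obstacle, such as it is, is organizing this endpoint case cleanly; the interior argument via the geometric-sum factorization and the triangle inequality is the crux and is short. I would therefore structure the write-up as: (1) reduce to $\psi$ and introduce $\beta$; (2) necessity via the zero of $\psi'$; (3) sufficiency via the factorization identity and the estimate $|\beta|(n-1)\le 1$; (4) a remark on the boundary case.
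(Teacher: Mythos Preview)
Your argument is correct and genuinely different from the paper's. The paper does not attack injectivity directly; instead it shows that \eqref{66} is equivalent to $\phi$ being \emph{starlike}, via the criterion $\mathrm{Re}\!\left(\tfrac{z\phi'(z)}{\phi(z)}\right)>0$, which in turn is rewritten as $\frac{d}{dr}|\phi(re^{i\theta})|^2>0$. Computing this derivative yields a quadratic in $\lambda^n$ whose nonnegativity on $\overline{\mathbf D}$ is then analyzed case by case (odd/even $n$, $\cos(n\theta)=\pm1$), and this reduces exactly to $1\ge(n-1)^2\lambda^{2n}$. Your route---the factorization $1=\pm\beta z_1z_2\sum_{k=0}^{n-2}z_1^{k}z_2^{\,n-2-k}$ followed by the triangle inequality---is shorter and entirely elementary; what the paper gains in return is the stronger geometric conclusion (starlikeness, hence in particular that each $\Omega$ in this family is star-shaped with respect to the origin), which you do not obtain. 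One small slip: after dividing by $z_1-z_2$ the denominator in your displayed identity should be $z_1-z_2$, not $z_2-z_1$ (equivalently, there is a missing minus sign); this is harmless since you take absolute values in the next line. Your boundary remark about $|\beta|=\tfrac{1}{n-1}$ is not needed for the proposition as stated (univalence on the open disc), and indeed the paper does not treat injectivity on $\overline{\mathbf D}$ either.
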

\begin{proof}
	The necessity is obvious since the strict inequality $1>(n-1)^2\lambda^{2n}$ is just the Schur-Cohn condition for $\phi'$ to have no zeros in $\overline{\mathbf{D}}$.
	
	To prove sufficiency we shall prove that \eqref{66} implies that $\phi$ is starlike, and in particular univalent. Recall (\cite[Theorem 2.5]{P75}) that starlikeness is equivalent to that
	$	\mathrm{Re}\left(z\frac{\phi'(z)}{\phi(z)}\right)>0$ for all $z\in\D$.
	Using the Cauchy-Riemann equations one finds that this is equivalent to
	\begin{equation*}
	\frac{d}{dr}|\phi(z)|^2>0,\qquad z=re^{i\theta}\in\mathbf{D}.
	\end{equation*}
(Cf. \cite{Fejer1951}).
Using Eq. (\ref{eq: Z_n mapping}) we find that $\frac{d}{dr}|\phi(z)|^2$ is a positive multiple of
	\begin{equation}\label{pos}
	2r\left(\lambda^{2n}r^{2n}(1-n)-\lambda^nr^n(2-n)\cos(n\theta)+1\right).
	\end{equation}
This is a quadratic polynomial in $\lambda^n$ which is clearly positive when $\lambda=0$. We need to prove that it remains positive for all $r$ with $0\le r< 1$ if $\lambda<0$ and \eqref{66} holds.
We may assume that $r=1$ and $\cos(n\theta)=\pm 1$.
For odd $n$, the non-negativity of \eqref{pos} then means
	\begin{equation}\label{in1}
	\lambda^{2n}(1-n)-\lambda^n(2-n)+1\ge 0
	\end{equation}
	while for even $n$, it means
	\begin{equation}\label{in2}
	\lambda^{2n}(1-n)+\lambda^n(2-n)+1\ge 0.
	\end{equation}
	By assumption $\lambda<0$, so the inequalities \eqref{in1}, \eqref{in2} amount to  $\lambda^n(n-1)\le 1$ and $-1\ge \lambda^n(n-1)$ respectively. This is equivalent to $1\ge \lambda^{2n}(n-1)^2$ and proves that $\phi$ is starlike if and only if \eqref{66} holds.
\end{proof}

\section{Three examples of uniqueness}
In this section we give several further examples of uniqueness of quadrature domains based on real comprehensive triangular decomposition and other computational methods from algebraic geometry (i.e.~Gröbner basis computations).

%\subsection{Example I}
\begin{example}
Let $\Omega$ be a quadrature domain satisfying
\begin{equation}\label{Eq: QI example I}
\int_\Omega f\ dA=f(0)+f\left(\frac{265}{153}+i\right)+f\left(\frac{265}{153}-i\right),\qquad f\in AL^1(\Omega)
\end{equation}
the nodes are chosen such that the union $D(0,1)\cup D(265/153+i,1)\cup D(265/153-i,1)$ is just barely simply connected (this is guaranteed by $265/153<\sqrt{3}$), see Fig. \ref{fig: non-starlike example I}.

To find the mapping $\phi:\mathbf{D}\to\Omega$ we need to solve the six complex equations $a_i=\phi(\lambda_i),\ w_i=\phi'(\lambda_i),\ i=1,2,3$, or equivalently 12 real equations. Without loss of generality we pick $\lambda_1=0=a_0$ and $w_1>0$, moreover, using the mirror symmetry in \eqref{Eq: QI example I} we get $\lambda_2=\bar{\lambda}_3$ and $w_2=\bar{w}_3$. This means that in total we only have to solve for five real variables. The equations are of course rational, meaning that in order to calculate the Gröbner basis we need to clear denominators and algebraically remove the new roots introduced by this. These new roots  correspond to poles of the original rational system, and hence are not of interest. The defining equations without roots at the poles are obtained by saturating the resulting ideal by the product of the equations from the denominators.

A Gröbner basis calculation shows that there is a unique solid quadrature domain obeying \eqref{Eq: QI example I}, depicted in Fig. \ref{fig: non-starlike example I}.
To be explicit, we can write $(\lambda_1,\lambda_2,\lambda_3)=(0,l+ip,l-ip)$ and
 $(w_1,w_2,w_3)=(x_1,x_2+iy_2,x_2-iy_2)$, where
\begin{equation*}\label{eq: parameters example I}
\begin{array}{c}
l=0.866022320861578...,\ p=0.499994659802733...,\ x_1=1.00001067993828...,\\ x_2=93633.9999439279...,\ y_2=0.866068567766777....
\end{array}
\end{equation*}

\begin{figure}[tbh!]
	\centering
	\includegraphics[width=0.7\textwidth]{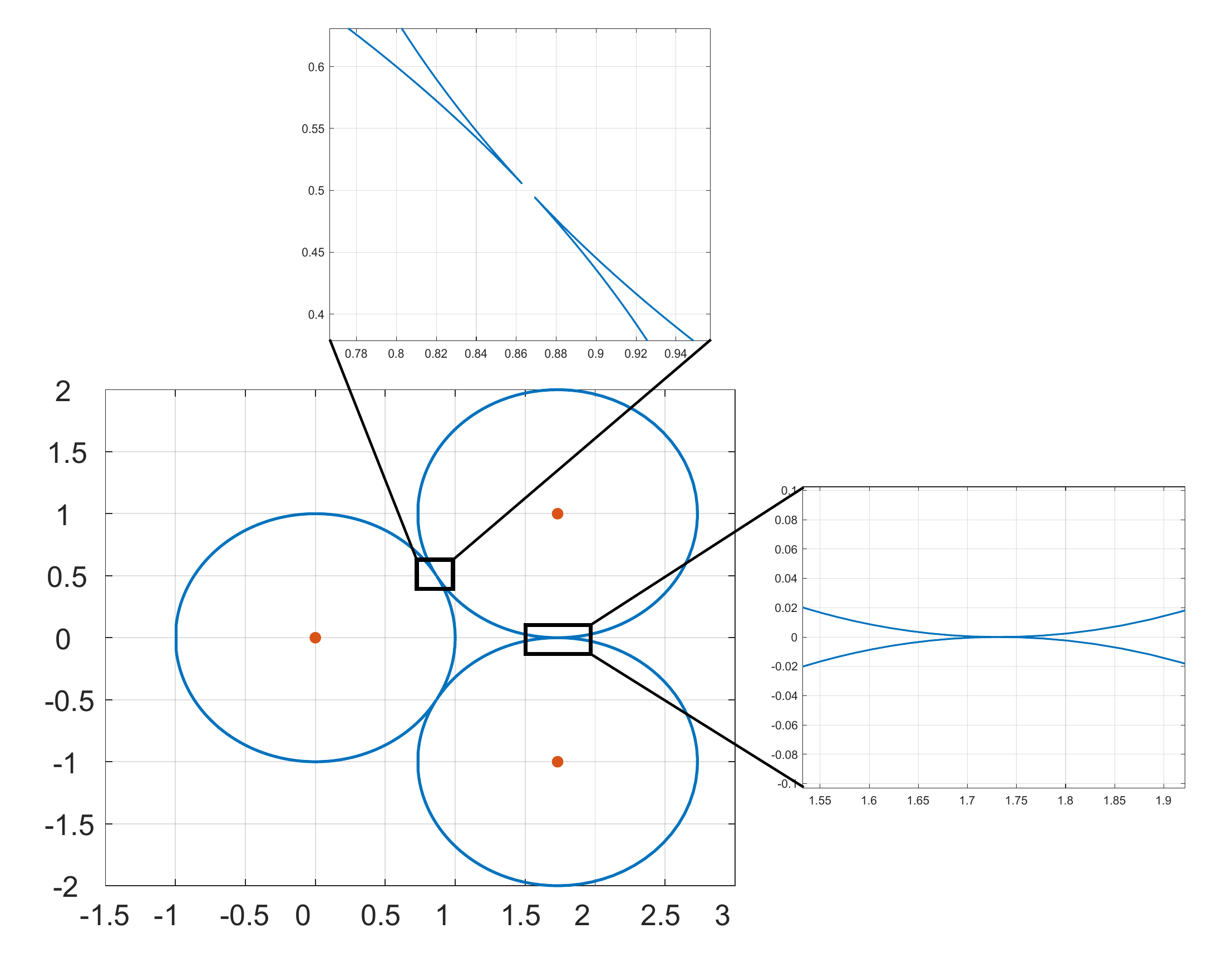}
	\caption{The quadrature domain defined by Eq. \eqref{Eq: QI example I}. (A close inspection shows that the domain is in fact simply connected.)}
	\label{fig: non-starlike example I}
\end{figure}
Note that the values $l,p,x_1,x_2,y_2$ above can be obtained as exact expressions in a field extension of $\QQ$ from the result of our Gröbner basis computation, however we opt to give the numerical approximations here for simplicity.
\end{example}

\begin{example}
Let $c_1>0$ and let $\Omega$ be a quadrature domain satisfying
\begin{equation*}
\int_\Omega f\ dA=c_1\left[f\left(-2\right)+f\left(-2e^{2\pi i/3}\right)+f\left(-2e^{4\pi i/3}\right)\right]+f(0),\qquad f\in AL^1(\Omega).
\end{equation*}
This is similar to a
$\mathbb{Z}_3$-symmetric domain in Subsection \ref{smashs},
but with an additional node at the origin.
Assuming $\Omega$ is simply connected, the mapping $\phi:\mathbf{D}\to\Omega$ is given by
\begin{equation*}\label{eq: map example II}
\phi(z)=\frac{3c_1}{w_1}\frac{z}{1-\lambda^3z^3}+\frac{c_2}{w_2}z
\end{equation*}
where $\phi(\lambda)=-2=a,\ \phi'(\lambda)=w_1,\ \phi'(0)=w_2$ and $c_2=1$.
Clearing denominators leads to the semi-algebraic system
\begin{equation}
\begin{cases}
3c_1\lambda(\lambda^6-1-w_2)=-2w_1w_2(\lambda^6-1)\\
3c_1(\lambda^12-2\lambda^6(1-w_2)+1+w_2)=w_1^2w_2(\lambda^6-1)\\
3c_1(1+w_2)=w_1w_2^2\\
c_1>0\\
-1<\lambda<0\\
w_1>0,\;w_2>0
\end{cases}\label{eq:ExIIS5}
\end{equation}with variables $w_1,w_2,\lambda$ and parameter $c_1$.
\begin{figure}[h]
    \centering
    \begin{tabular}{c @{\hspace{1.1\tabcolsep}} c @{\hspace{1.1\tabcolsep}} c}
      \includegraphics[trim={70 620 430 10},clip,width=0.3\textwidth]{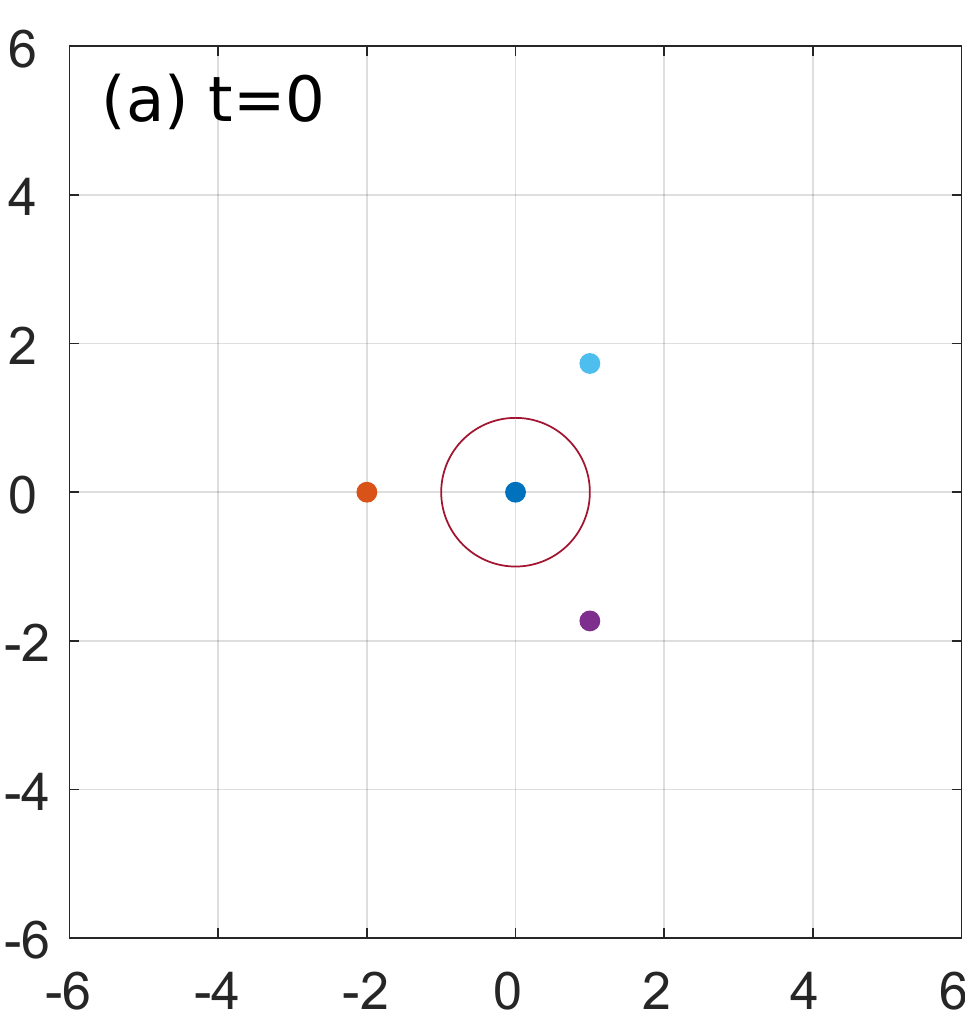} \hspace{-37mm}&
    \includegraphics[trim={423 620 73 10},clip,width=0.305\textwidth]{HS.pdf}&
    \includegraphics[trim={70 320 430 310},clip,width=0.299\textwidth]{HS.pdf}
    \\
          \includegraphics[trim={423 320 73 321},clip,width=0.302\textwidth]{HS.pdf} &
 \includegraphics[trim={70 10 430 617},clip,width=0.295\textwidth]{HS.pdf}&
 \includegraphics[trim={423 10 73 610},clip,width=0.295\textwidth]{HS.pdf}
    \end{tabular}
	\caption{Hele-Shaw growth of the unit disc $\D$ under injection at constant rate $1$ at three nodes $-2,-2e^{2\pi i/3},-2e^{4\pi i/3}$. (a) and (b) lie in the cell $C_0$ of the RCTD; (a) is the initial domain $\D$ (time $t=0$) with the three injection points plotted, (b) corresponds to $t=1/2$ and is disconnected. In (c), $t=c_1>1$ so the domain is simply connected and lies in cell $C_1$ as do (d), (e), and (f).}
	\label{fig: HS}
\end{figure}
Let $\mathcal{W}$ denote the semi-algebraic system \eqref{eq:ExIIS5}. Computing a triangular decomposition yields two cells: $C_0=\{ c_1\in \RR \;|\; c_1\leq 1 \}$ and $C_1=\{ c_1\in \RR \;|\; c_1> 1 \}$ in the parameter space $\RR$ (with coordinate $c_1$). For each $c_1\in C_0$ the system $\mathcal{W}$ has no real solutions while for each $c_1\in C_1$ the system $\mathcal{W}$ has exactly one real solution $(w_1,w_2,\lambda)\in \RR^3$; note that $\RR=C_0 \sqcup C_1$.

Observe that to obtain a solution, we must have $c_1>1$ in order that $\Omega$ be simply connected (see Fig. \ref{fig: HS}(c)).
If we interpret $c_1$ as time, we may think of the quadrature domain $\Omega(c_1)$ as the result of a Hele-Shaw evolution, starting from
$\Omega(0)=D(0,1)$, after injecting fluid through the nodes $-2,-2e^{2\pi i/3},-2e^{4\pi i/3}$ at constant rate $1$, for $c_1$ units of time (see Subsection \ref{growth} below for more about this). The evolution is depicted in Fig. \ref{fig: HS}.
\end{example}

\begin{example}
Let $\Omega$ be a quadrature domain satisfying
\begin{equation}\label{eq: QI example III}
\int_\Omega f\ dA=-\frac{1}{2}(f(a)+f(-a))+2f(0),\qquad f\in AL^1(\Omega).
\end{equation}
This is a $\mathbb{Z}_n$- symmetric domain with $n=2$ as in Subsection \ref{smashs} but with negative weights and an additional node at the origin with strength two.
The mapping $\phi:\mathbf{D}\to\Omega$ then satisfies
\begin{equation*}
\phi(z)=\frac{2c_1}{w_1}\frac{z}{1-\lambda^2z^2}+\frac{c_2}{w_2}z
\end{equation*}
where $\phi(\lambda)=a,w_1=\phi'(\lambda),w_2=\phi'(0),c_1=-1/2$ and $c_2=2$. To obtain unique solutions for this system we have to impose Schur-Cohn constraints; this is as easily done for arbitrary $n$ as for $n=2$ so we do it for arbitrary $n$.

Set $p(z)=b_0+b_1z^{n}+b_2z^{2n}$ where $b_0=c_1w_2n,\ b_1=\lambda^n(c_1w_2n(n-1)-2c_2w_2)$ and $b_2=c_2w_1\lambda^{2n}$. The derivative of the mapping $\phi$ is given by
\begin{equation*}
\phi'(z)=\frac{c_1n}{w_1}\frac{1+(n-1)\lambda^nz^n}{(1-\lambda^nz^n)^2}+\frac{c_2}{w_2}=\frac{p(z)}{w_1w_2(1-\lambda^nz^n)^2}.
\end{equation*}
When $n=2$ we have that $b_0=4c_1w_2,\ b_1=\lambda^2(2c_1w_2-2c_2w_2)$ and $b_2=c_2w_1\lambda^{4}$. It follows that the Schur-Cohn constraints for the $n=2$ case are given by
\begin{equation*}
\begin{cases}
b_0^2-b_2^2\ge 0\\
(b_0^2-b_2^2)^2-(b_0b_1-b_1b_2)^2\ge 0,
\end{cases}
\end{equation*}
where we allow for equality to include the case where $\d \Omega$ has cusps, see Fig. \ref{fig: exIII}.
\begin{figure}[t]
	\centering
	\includegraphics[trim=4cm 9.5cm 4cm 9.5cm,clip,width=0.5\textwidth]{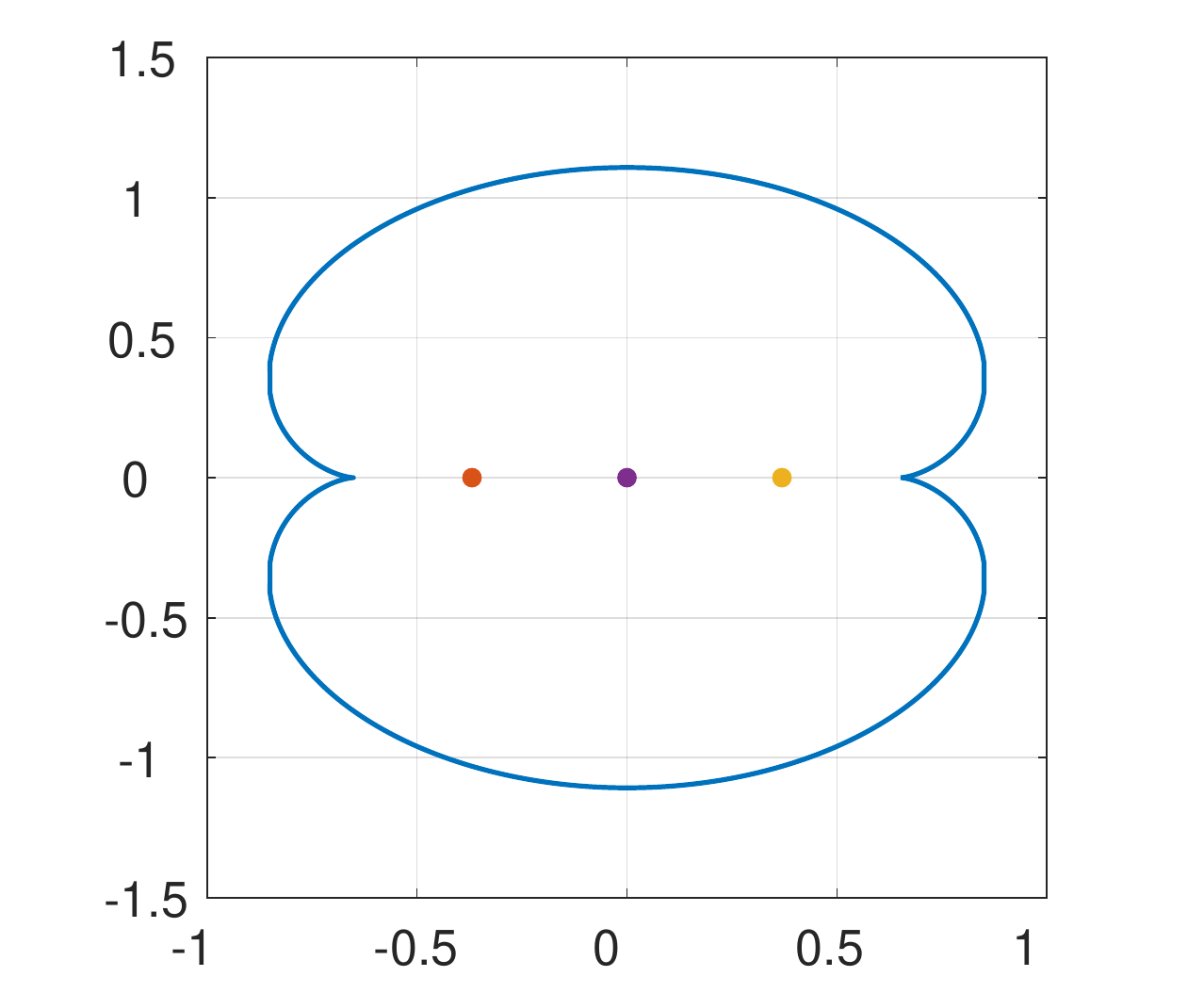}
	\caption{The unique solid quadrature domain satisfying \eqref{eq: QI example III} for the parameter-value $a=\vartheta \in C_1$.}
	\label{fig: exIII}
\end{figure}
Clearing denominators in the equations $\phi(\lambda)=a,w_1=\phi'(\lambda),w_2=\phi'(0)$, recalling that $c_1=-1/2$, $c_2=2$, and adding the Schur-Cohn constraints we obtain the semi-algebraic system
\begin{equation}
\begin{cases}
\lambda\, \left( 2\,{\lambda}^{4}{w_1}-2\,{ w_1}+{ w_2}
\right) =aw_{{1}}w_{{2}} \left( {\lambda}^{4}-1 \right) \\
w_1^2w_2\left( {\lambda}^{4}-1 \right) ^{2}=\phi'(\lambda)= \left( 2\,{\lambda}^{8}-4\,{\lambda}^{4}+2 \right) w_{{1}}-{\lambda}^{4}w_{{2}}-w_{{2}}\\
w_1w_2^2=2w_1-w_2\\
b_0^2-b_2^2\ge 0\\
(b_0^2-b_2^2)^2-(b_0b_1-b_1b_2)^2\ge 0\\
a<0\\
-1<\lambda<0\\
w_1,w_2>0
\end{cases}\label{eq:EX3}
\end{equation}
where we treat $a$ as a parameter and $(w_1,w_2,\lambda)$ as variables (note $c_1,c_2$ are fixed). Let $\mathcal{W}$ denote the semi-algebraic system \eqref{eq:EX3}. Let $\tau=27\sqrt{3}+54$ and set $$\vartheta=         	
-\frac{1}{2} \, \sqrt{\frac{{\tau}^{\frac{2}{3}} + 4 \, {\tau}^{\frac{1}{3}} + 9}{{\tau}^{\frac{1}{3}}}} + \frac{1}{2} \, \sqrt{-{\tau}^{\frac{1}{3}} - \frac{9}{{\tau}^{\frac{1}{3}}} + \frac{16}{\sqrt{\frac{{\tau}^{\frac{2}{3}} + 4 \, {\tau}^{\frac{1}{3}} + 9}{{\tau}^{\frac{1}{3}}}}} + 8}\cong -0.369.
$$\normalsize

 A computation of a real comprehensive triangular decomposition for $\mathcal{W}$ gives that the parameter space $\RR$ (with coordinate $a$) is partitioned into two cells: $C_0=\{a\in \RR\;|\; a<\vartheta \} \sqcup \{a\in \RR\;|\; a\geq 0 \}$ and $C_1=\{a\in \RR\;|\; \vartheta \leq a <0 \}$. For all $a\in C_0$ the system $\mathcal{W}$ has no solutions and for all $a\in C_1$ the system $\mathcal{W}$ has exactly one real solution $(w_1,w_2,\lambda)\in \RR^3$. It follows that for any valid choice of the parameter $a$ there exists a unique quadrature domain. The parameter value $a=\vartheta\in C_1$ corresponds to the case where $\d \Omega$ has two cusps, as shown in Fig. \ref{fig: exIII}.
\end{example}

\section{Other related topics} \label{concrem}

In this section, we briefly discuss some of the existing methods and constructions in the theory of quadrature domains with bearing for our above methods. The reader interested in a comprehensive picture of the area should consult one or several of the textbooks \cite{Crowdy2020,Davis74,Gustafsson2006,Gustafsson2014,Shapiro1992,Varchenko1992}.

%\commy{Please check carefully the following subsection!}

\subsection{The defining polynomial of a q.d.} \label{mupp} Consider a quadrature domain $\Omega$ of order $n=n_1+\cdots+n_m$ corresponding to a point-functional
\begin{equation}\label{pf2}\mu(f)=\sum_{k=1}^m\sum_{j=0}^{n_k-1} c_{kj}f^{(j)}(a_k).\end{equation}
It was shown by Aharonov and Shapiro in \cite{Aharonov1976} that the boundary $\d\Omega$ is an algebraic curve, and more precisely there exists
an irreducible polynomial $P(z,w)=\sum_{j,k=0}^n a_{jk}z^jw^k$ which is self-conjugate and normalized (i.e. $a_{jk}=\bar{a}_{kj}$, $a_{nn}=1$) such that
\begin{equation}\label{special}\d\Omega=\{z\,;\, P(z,\bar{z})=0\}\setminus \{\text{``special points''}\},\end{equation}
where a ``special point''  is an isolated solution to the equation $P(z,\bar{z})=0$. (Cf.~\cite{Sh87})

It is convenient to refer to the polynomial $P$ in \eqref{special} as the ``defining polynomial'' for the quadrature domain~$\Omega$.
A natural problem, then, is to try to characterize the set of defining polynomials $P$ which are associated with a given point-functional $\mu$. This problem has been
investigated by Gustafsson in \cite{G83,Gu88}, and leads to interesting insights with respect to the uniqueness question (Q).
To elucidate this, we write a defining polynomial in the form
\begin{equation}\label{alg0}P(z,w)=\sum_{j=0}^n w^jp_j(z),\end{equation}
where each $p_j(z)$ is a polynomial of degree at most $n$.

It is shown in \cite{G83} (cf.~ also \cite{Crowdy2001}) that there is an explicit bijection between point-functionals $\mu$ (of the form \eqref{pf2}) and the last
two polynomials $p_{n-1},p_n$ which may appear in the expansion \eqref{alg0}. As pointed out in \cite{G83,Crowdy2001}, the determination of the remaining polynomials $p_0,\ldots,p_{n-2}$
is generally a difficult matter. In the simply connected case, this problem is similar to the question of completely characterizing all solutions to the master formula (in Theorem
\ref{theorem: structure of univalent function}), for each point-functional $\mu$.

\begin{remark}
%The point of view offered by the defining polynomial has several interesting aspects.
%In particular,
The notion of a special point %(isolated solution to $P(z,\bar{z})=0$)
is not just an artefact of the construction. %, but is in fact of general importance for the theory of quadrature domains.
For example, such points appear naturally in the process of forming smash sums of discs (i.e., each time discs ``collide'' as in Figure \ref{fig: HS} (c))
 and they have the physical interpretation of stagnation points for fluid flows.
%This aspect sheds new
%light on the picture in Figure \ref{eq:EX3}
We refer to \cite{Gu88,Crowdy2001} %as well as \cite[Chap. 11]{Crowdy2020}
for further details.
\end{remark}

\subsection{Connection to Laplacian growth} \label{growth}

Let $t$ be a real parameter and consider a family of quadrature domains $\Omega(t)$, each obeying the quadrature identity
\begin{equation}\label{evol}\int_{\Omega(t)} f\, dA=\mu_t(f)=(c_1+qt)f(a_1)+c_2f(a_2)+\ldots+c_nf(a_n),\qquad f\in AL^1(\Omega),\end{equation}
where $q$ is a real constant, and where we take $a_1=0$ for simplicity.
If $q>0$, we may think of $\Omega(t)$ as an expanding blob of fluid, obtained from $\Omega(0)$ by injecting fluid at the origin, at constant rate $q$.
(If $q<0$, the domains contract due to suction.)
The resulting evolution $t\mapsto \Omega(t)$ is known under the names ``Hele-Shaw evolution'' and ``Laplacian growth'', see e.g. \cite{Mineev2009,Gustafsson2014,Varchenko1992}.

For values of $t$ such that $\Omega(t)$ is solid, we will write $z\mapsto \fii(z,t)$ for the Riemann mapping $\D\to \Omega(t)$.

 \begin{remark} Any domain $\Omega\in Q(\mu_t,AL^1)$ is by definition of finite order, and hence its boundary is part of an algebraic curve, see Subsection \ref{mupp}. This implies
that the boundary curve is smooth everywhere with the possible exception of finitely many singular points, which may be either cusps $p\in\d\Omega$ pointing inwards (corresponding to values $p=\fii(z)$, $z\in\T$ at which $\fii'(z)=0$), or contact points $p\in\d\Omega$
(which satisfy $p=\fii(z_1)=\fii(z_2)$ for two distinct points $z_1,z_2\in\T$). For solid domains, contact points are excluded.
\end{remark}

In the following, we suppose that $\Omega(t)\in Q(\mu_t,AL^1)$ is a smoothly varying family of \textit{solid} domains obeying \eqref{evol} for $t$ in some suitable time-interval.
A basic result relates
the ``time-derivative'' $\dot{\fii}(z,t):=\frac \d {\d t} \fii(z,t)$ to the ``space-derivative'' $\fii'(z,t)=\frac \d {\d z}\fii(z,t)$.

\begin{theorem} \label{PG}
The conformal map $\fii(z,t)$ onto $\Omega(t)$ satisfies Polubarinova-Galin's equation
\begin{equation}\label{pgeq}\re\left[\dot{\fii}(z,t)\overline{z\fii'(z,t)}\right]=\frac q 2,\qquad z\in\T.\end{equation}
\end{theorem}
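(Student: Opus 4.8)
The plan is to derive the Polubarinova--Galin equation \eqref{pgeq} from the quadrature identity \eqref{evol} by differentiating it in time and exploiting that the test functions may be taken to be Cauchy kernels based at exterior points, as in the derivation of \eqref{qua}. First I would recall that, by the discussion surrounding \eqref{qua}, the quadrature identity \eqref{evol} is equivalent to the statement that $\d U^{\Omega(t)}=\d U^{\mu_t}$ on $\C\setminus\overline{\Omega(t)}$, i.e.
\begin{equation*}
\int_{\Omega(t)}\frac{dA(\zeta)}{z-\zeta}=\frac{c_1+qt}{z}+\sum_{k=2}^n\frac{c_k}{z-a_k},\qquad z\in\Omega(t)^e.
\end{equation*}
Fix such a $z$ (it stays exterior for $t$ in a small interval since the family varies smoothly) and differentiate both sides with respect to $t$. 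On the right-hand side only the term $(c_1+qt)/z$ depends on $t$, giving $q/z$. On the left-hand side, writing the integral as $\int_\D \dfrac{|\fii'(\xi,t)|^2}{z-\fii(\xi,t)}\,dA(\xi)$ over the fixed domain $\D$ and differentiating under the integral sign, one obtains a boundary-type expression; alternatively, and more cleanly, one differentiates the area integral using the transport formula $\frac{d}{dt}\int_{\Omega(t)}g\,dA=\frac{1}{\pi}\int_{\d\Omega(t)}g\,v_n\,ds$ for the normal velocity $v_n$ of the boundary, applied to $g(\zeta)=1/(z-\zeta)$, which is harmonic (indeed analytic) on $\overline{\Omega(t)}$ since $z$ is exterior.

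The next step is to translate the normal velocity $v_n$ of $\d\Omega(t)$ into conformal-map language. If $\xi=e^{i\theta}\in\T$ and the boundary point is $\fii(\xi,t)$, then the boundary moves with velocity $\dot\fii(\xi,t)$, while the outward unit normal is $\xi\fii'(\xi,t)/|\fii'(\xi,t)|$ (this is the standard fact that $\fii$ maps $\T$ to $\d\Omega$ conformally, so the outward normal direction is $\xi\fii'(\xi)$ up to positive scaling), and the arclength element is $ds=|\fii'(\xi,t)|\,|d\xi|$. Hence $v_n\,ds=\re\!\left[\dot\fii(\xi,t)\,\overline{\xi\fii'(\xi,t)/|\fii'(\xi,t)|}\right]|\fii'(\xi,t)|\,|d\xi|=\re\!\left[\dot\fii(\xi,t)\,\overline{\xi\fii'(\xi,t)}\right]|d\xi|$. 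Substituting this into the transport formula and parametrizing $\d\Omega(t)^e$-facing Cauchy kernel integral over $\xi=e^{i\theta}$, the identity obtained from differentiating the quadrature relation becomes
\begin{equation*}
\frac{1}{\pi}\int_{\T}\frac{\re\!\left[\dot\fii(\xi,t)\,\overline{\xi\fii'(\xi,t)}\right]}{z-\fii(\xi,t)}\,|d\xi|=\frac{q}{z},\qquad z\in\Omega(t)^e.
\end{equation*}
The final step is to recognize the left-hand side as the Cauchy transform (evaluated at the exterior point $z$) of the boundary measure on $\d\Omega(t)$ with density $\re\!\left[\dot\fii\,\overline{\xi\fii'}\right]$, and the right-hand side as the Cauchy transform of $q$ times the point mass at the origin $a_1=0$; since two measures supported on $\overline{\Omega(t)}$ with equal Cauchy transforms throughout the (connected) exterior must agree — this is the standard injectivity of the Cauchy transform on compactly supported measures, which also underlies the equivalence of \eqref{qua} and \eqref{qi} cited in the excerpt — one concludes that the boundary density equals that of $q\,\delta_0$ pushed to the boundary, which forces $\re\!\left[\dot\fii(\xi,t)\,\overline{\xi\fii'(\xi,t)}\right]=q/2$ on $\T$. (The factor $1/2$ comes from the normalization $dA=\frac1\pi dx\,dy$ together with $|d\xi|=d\theta$ and $\frac{1}{2\pi}\int_\T|d\xi|=1$; tracking this normalization carefully is where one must be attentive.)

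The main obstacle I anticipate is making the differentiation-under-the-integral and transport-formula steps rigorous in the presence of boundary singularities: the domains $\Omega(t)$ need not be smooth — they may develop inward-pointing cusps (where $\fii'=0$ on $\T$), as the excerpt's remark explicitly notes — so the boundary velocity and the normal are not defined everywhere, and the arclength measure degenerates at cusps. The clean way around this is to work on the fixed reference domain $\D$ throughout (differentiating $\int_\D |\fii'|^2/(z-\fii)\,dA$ directly, using the smoothness of $\fii(\cdot,t)$ up to $\overline\D$ from Lemma \ref{lemma: holomorphic continuation}, and then integrating by parts on $\D$ using $\dbar$), which transfers all analytic issues to the well-behaved map $\fii$ and its $t$- and $z$-derivatives; the cusp locus $\{\fii'=0\}\cap\T$ is then merely a finite set of measure zero on $\T$ and causes no trouble. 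A secondary technical point is justifying that, for $z$ fixed in the exterior, $z$ remains in $\Omega(s)^e$ for $s$ near $t$ and that the integrand stays integrable uniformly — both follow from the assumed smooth variation of the solid family $\Omega(t)$ and the openness of $\Omega(t)^e$.
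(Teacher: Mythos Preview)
The paper does not prove Theorem \ref{PG}; it simply cites \cite[Section 1.4.2]{Gustafsson2006}. So there is no proof to compare against, and your task reduces to giving a correct self-contained argument. Your derivation via the transport formula and the conformal expression $v_n\,ds=\re[\dot\fii\,\overline{\xi\fii'}]\,|d\xi|$ is standard and sound, but the final step has a genuine gap.

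You assert that ``two measures supported on $\overline{\Omega(t)}$ with equal Cauchy transforms throughout the (connected) exterior must agree --- this is the standard injectivity of the Cauchy transform on compactly supported measures.'' This is false as stated: the quadrature identity itself is a counterexample, since $\1_\Omega\,dA$ and $\mu$ have identical exterior Cauchy transforms by \eqref{qua} yet are different measures. Equality of Cauchy transforms on $\Omega^e$ only yields equality of the analytic moments $\int\zeta^k\,d\nu$ for $k\ge 0$, which is far from determining a measure. (The equivalence of \eqref{qua} and \eqref{qi} that you invoke is not an injectivity statement at all; it goes the other way.)

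The repair is to exploit that your boundary density $h(\xi):=\re[\dot\fii\,\overline{\xi\fii'}]$ is \emph{real}. Because $\Omega(t)$ is solid, the differentiated identity $\frac{1}{\pi}\int_\T h(\xi)\,g(\fii(\xi))\,d\theta=q\,g(0)$ holds for all $g\in HL^1(\Omega)$, not just analytic $g$ (equivalently: take complex conjugates of your Cauchy-kernel identity and add, using that $h$ is real). For harmonic $g$ the mean-value property applied to $g\circ\fii$ on $\D$ gives $g(0)=\frac{1}{2\pi}\int_\T g(\fii(\xi))\,d\theta$, whence $\int_\T (h-q/2)\,g(\fii)\,d\theta=0$. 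Since the restrictions $g\circ\fii|_\T$ are dense in $C(\T)$ (Poisson extension), this forces $h\equiv q/2$, which is \eqref{pgeq}.
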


A proof can be found in the book \cite{Gustafsson2006}, Section 1.4.2.

\medskip

Gustafsson and Lin in \cite{GL13} have studied the evolution of zeros and poles of the space-derivative $\fii'(z,t)$. We will now briefly
indicate a different possible approach based on our basic structure theorem, Theorem \ref{theorem: structure of univalent function}.
Denote by $\lambda_i(t)$ the points in $\D$ such that $\fii(\lambda_i(t),t)=a_i$, and write $w_i(t)=\fii'(\lambda_i(t),t)$.
Also denote $c_1(t)=c_1+qt$ and $c_j(t)\equiv c_j$ when $j\ge 2$.
In view of \eqref{meq}, the mapping $\fii(z,t)$ obeys
\begin{equation}\label{meq3}\fii(z,t)=\sum_{i=1}^n \frac {\overline{c_i(t)}}{\overline{w_i(t)}}\frac {z}{1-\overline{\lambda_i(t)} z},\end{equation}
which gives
\begin{equation}\label{meq4}\fii'(z,t)=\sum_{j=1}^n\frac {\overline{c_i(t)}}{\overline{w_i(t)}}\frac {1}{(1-\overline{\lambda_i(t)} z)^2}\end{equation}
and (denoting complex conjugation by $a^\dagger=\bar{a}$, and abbreviating $c_i=c_i(t)$, $\lambda_i=\lambda_i(t)$, etc.)
\begin{equation}\label{meq5}\dot{\fii}(z,t)=z\left[\frac 1 {w_1}-\sum_{i=1}^n(\frac {c_i\dot{w}_i}
{w_i^2})^\dagger\frac 1 {1-z\bar{\lambda}_i}+\sum_{i=1}^n\frac {\bar{c}_i}{\bar{w}_i}\frac {z\overline{\dot{\lambda_i}}}{(1-z\bar{\lambda}_i)^2}\right].\end{equation}

Substituting \eqref{meq4} and \eqref{meq5} in Polubarinova-Galin's equation \eqref{pgeq}, we obtain a nonlinear system of ordinary differential equations connecting the
functions $\lambda_i(t)$ and $w_i(t)$ and their time-derivatives $\dot{\lambda}_i(t),\dot{w}_i(t)$. We may note from \eqref{meq3} that the poles of the Riemann map $\fii(z,t)$ are given by $1/\overline{\lambda_i(t)}$.

Similar sets of equations have been studied also in the case when $\fii(z,t)=a_1(t)z+\ldots +a_n(t)z^n$ is a polynomial in $z$, see \cite[Section 2.1.1]{Gustafsson2014} and  \cite{Mineev1990}. It is clear that our method applies in this case as well, but for reasons of length, we shall not pursue this issue here. % future investigations.

%However, this is not the right place to attempt to further analyze systems.

%A similar set of equations have been derived for polynomial mappings by Mineev in \cite{Mineev1990} in the case of polynomial Riemann maps.

%\commy{The following subsection is also new}

\subsection{Multi-connected quadrature domains} \label{mcq} A theory for multiply connected quadrature domains, using the Schottky double of the domain, is developed in the paper \cite{G83}.
%quadrature domains of arbitrary connectivity, such as in Figure \ref{fig: HS}.
%In \cite{G83} a theory for multiply connected q.d.'s is based on the Schottky double.
%'', namely a Riemann surface formed by equipping the quadrature domain with a suitable ``backside''.
The mapping problem for such domains has been the subject of numerous investigations.
%e.g.\, in
%\cite{Crowdy2004,Crowdy2005,G83,Gu88,Sh87}.
%studied at length by several
%authors,
%notably by Crowdy and coauthors (e.g.~ \cite{Crowdy2004,Crowdy2005}).
In particular, for a significant class of quadrature domains (e.g.~based on forming suitable smash sums of discs), the Riemann map can be constructed using the corresponding ``Schottky-Klein prime function''.
%is a suitable smash sum of discs
%(compare Figure \ref{fig: HS}).
This approach is found in the works \cite{Crowdy2004,Crowdy2005} as well as
in the
recent monograph \cite[Chapter 11]{Crowdy2020}. %explains, in a pedestrian manner, how to explicitly construct the mapping function for some domains of this type.
(The Schottky-Klein function is surveyed in the article \cite{Crowdy2010}.)

Other relevant works in this connection are \cite{LM} on the topology of quadrature domains, and \cite{GS19}, which discusses related uniqueness questions.

%\smallskip

%\commy{Felix: Compare with Mineev's results in the polynomial case in \cite{Mineev1990}, as well as with computations in the book by Gustafsson et al.
%Do we get other differential equations?}

%\commy{Todo: Write about Crowdy's research in the introduction?}

%\commy{We must also mention the paper by Abanov et al (on Laplacian growth) \cite{Mineev2009}.}

\subsection*{Acknowledgments}
This work was supported in part by the Anders Wall Foundation.

\bibliographystyle{plain}
\bibliography{refs}% Produces the bibliography via BibTeX.

\end{document}